\theoremstyle{plain}
\newtheorem{theorem}{Theorem}
\newtheorem{lemma}[theorem]{Lemma}
\newtheorem{proposition}[theorem]{Proposition}
\newtheorem{claim}[theorem]{Claim}
\newtheorem{conjecture}[theorem]{Conjecture}
\theoremstyle{definition}
\theoremstyle{remark}
\newtheorem{remark}[theorem]{Remark}
\newcommand{\arxiv}[1]{\href{https://arxiv.org/abs/#1}{\texttt{arXiv:#1}}}
\newcommand{\N}{\mathbb{N}}
\newcommand{\Q}{\mathbb{Q}}
\newcommand{\Z}{\mathbb{Z}}
\DeclareMathOperator{\Circ}{Circ}
\title{On circulant nut graphs} % \\ with (almost) consecutive generator sets}
\author{Ivan Damnjanovi\'c\thanks{The corresponding author.}\\
\small University of Ni\v s, Faculty of Electronic Engineering\\[-0.8ex] 
\small Aleksandra Medvedeva 14, 18115 Ni\v s, Serbia\\
\small\tt ivan.damnjanovic@elfak.ni.ac.rs\\
\and
Dragan Stevanovi\'c\thanks{The second author is supported by 
the Serbian Ministry of Education, Science and Technological Development through the Mathematical Institute of SASA,
and by the project F-159 of the Serbian Academy of Sciences and Arts.}\\
\small Mathematical Institute of the Serbian Academy of Sciences and Arts\\[-0.8ex]
\small Kneza Mihaila 36, 11000 Belgrade, Serbia\\
\small\tt dragan\_stevanovic@mi.sanu.ac.rs}
\begin{document}

\maketitle

\begin{abstract}
A nut graph is a simple graph
whose adjacency matrix has the eigenvalue~0 with multiplicity~1
such that its corresponding eigenvector has no zero entries.
Motivated by a question of Fowler et al.~[\emph{Disc. Math. Graph Theory} 40 (2020), 533--557]
to determine the pairs $(n,d)$ for which a vertex-transitive nut graph of order $n$ and degree $d$ exists,
Ba\v si\'c et al.\ [\arxiv{2102.04418}, 2021] initiated the study of circulant nut graphs.
Here we first show that the generator set of a circulant nut graph necessarily contains equally many even and odd integers.
Then we characterize circulant nut graphs with the generator set $\{x,x+1,\dots,x+2t-1\}$ for $x,t\in\N$, 
which generalizes the result of Ba\v si\'c et al.\ for the generator set $\{1,\dots,2t\}$.
We further study circulant nut graphs with the generator set $\{1,\dots,2t+1\}\setminus\{t\}$,
which yields nut graphs of every even order $n\geq 4t+4$
whenever $t$~is odd such that $t\not\equiv_{10}1$ and $t\not\equiv_{18}15$.
This fully resolves Conjecture~9 from Ba\v si\'c et al.~[ibid.].
We also study the existence of $4t$-regular circulant nut graphs for small values of~$t$,
which partially resolves Conjecture~10 of Ba\v si\'c et al.~[ibid.].

\bigskip\noindent
{\bf Mathematics Subject Classification:} 05C50, 12D05, 13P05, 11C08. \\
{\bf Keywords:} Circulant graphs; Graph eigenvalues; Cyclotomic polynomials.
\end{abstract}

\section{Introduction}

Let $G=(V,E)$ be a simple graph on the vertex set $V=\{0,1,\dots,n-1\}$.
Graph $G$ is a \emph{nut graph} if its adjacency matrix $A_G$ has the eigenvalue~0 with multiplicity~1
such that an eigenvector corresponding to~0 has no zero entries.
We will assume that $G$ has at least two vertices,
as the one-vertex graph is trivially a nut graph.
Nut graphs were introduced as a mathematical curiosity
at the end of the 1990s in several papers of Sciriha~\cite{Sci1,Sci2,Sci3,Sci4} and Sciriha and Gutman~\cite{ScGu}.
Apparently, the name nut graph first appeared in~\cite{ScGu},
where it was also shown that nut graphs are characterized among singular graphs
as the graphs with a deck of nonsingular vertex-deleted subgraphs.
Further structural properties of nut graphs were later given in~\cite{Sci5,Sci6}.
The chemical justification for studying nut graphs of small degree
in terms of a single non-bonding molecular orbital and strong omni-conducting behaviour 
is given in~\cite{CoFG,FoPB,FPTBS,ScFo}.
Small nut graphs were enumerated by Fowler et al.~\cite{FPTBS},
while Coolsaet et al.~\cite{CoFG} devised a special algorithm for generating nut graphs.
Using this algorithm, they were able to enumerate all the nut graphs with up to 13~vertices,
nut graphs with the maximum degree at most three and up to 22~vertices,
cubic nut polyhedra with up to~34 vertices and nut fullerenes with up to 250~vertices.
Many further results about nut graphs are discussed at length in the monograph~\cite{ScFa}.

Structural properties of regular nut graphs were first studied by Sciriha and Fowler~\cite{ScFo}.
The existence of cubic and quartic nut graphs was considered by Gauci et al.~\cite{GaPS},
who fully characterized the orders for which such nut graphs exist:
a cubic nut graph of order~$n$ exists if and only if $n$ is an even integer, $n\geq 12$ and $n\notin\{14,16\}$,
while a quartic nut graph of order~$n$ exists if and only if $n\in\{8,10,12\}$ or $n\geq 14$.
Gauci et al.~\cite{GaPS} posed the problem to also determine orders for which $d$-regular nut graphs exist for $d\geq 5$, 
and Fowler et al.~\cite{FGGPS} managed to characterize such orders for all $d\leq 11$.
Fowler et al.~\cite{FGGPS} were also interested in further restricting nut graphs to be vertex-transitive,
and in this aspect they proved the following result.

\begin{theorem}[Fowler et al.~\cite{FGGPS}]
\label{th-vtnut}
Let $G$ be a vertex-transitive nut graph of order~$n$ and degree~$d$.
Then either $d\equiv_4 0$, $n\equiv_2 0$ and $n\geq d+4$,
or $d\equiv_4 2$, $n\equiv_4 0$ and $n\geq d+6$.
\end{theorem}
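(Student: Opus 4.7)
The plan is to extract the two families of congruence constraints from the spectral/combinatorial consequences of having a unique null eigenvector, and then to rule out the boundary cases $n=d$ and $n=d+2$ by a direct structural argument.

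First I would exploit vertex-transitivity to pin down the shape of the kernel eigenvector. Let $\mathbf{x}$ span the kernel of $A_G$. For any automorphism $\sigma$, the permutation matrix $P_\sigma$ commutes with $A_G$, so $P_\sigma\mathbf{x}$ lies in the one-dimensional null space; being obtained by permuting entries, $P_\sigma\mathbf{x}=\pm\mathbf{x}$. By vertex-transitivity, any two entries of $\mathbf{x}$ have the same absolute value, so after rescaling we may assume $\mathbf{x}\in\{+1,-1\}^{V}$. Write $V^+$ and $V^-$ for the corresponding sign classes; the row equation $(A_G\mathbf{x})_v=0$ at every vertex $v$ says that each vertex has the \emph{same} number of neighbours in $V^+$ as in $V^-$, so $d$ is even and every vertex sends exactly $d/2$ edges to each side. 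Since $\mathbf{x}$ cannot be constant (else $d=0$ and multiplicity $n$), both classes are nonempty, and picking $\sigma$ that moves a $+$-vertex to a $-$-vertex forces $P_\sigma\mathbf{x}=-\mathbf{x}$; hence $|V^+|=|V^-|=n/2$ and in particular $n$ is even.

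Next I would count the edges inside $V^+$. Each of the $n/2$ vertices of $V^+$ has exactly $d/2$ neighbours in $V^+$, so $|E(V^+)|=\frac{1}{2}\cdot\frac{n}{2}\cdot\frac{d}{2}=\frac{nd}{8}$ must be an integer. When $d\equiv_4 0$ this imposes nothing beyond $n$ even; when $d\equiv_4 2$, writing $d=4k+2$ gives $nd/8=n(2k+1)/4$, so $4\mid n$. This already yields the congruence parts of the theorem.

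It remains to rule out $n\leq d+2$. Clearly $n\geq d+1$, and since $n$ and $d$ have the same parity we get $n\geq d+2$. If $n=d+2$, every vertex has exactly $n-1-d=1$ non-neighbour, so $\overline{G}$ is a perfect matching and $G$ is the cocktail-party graph $K_{n/2\times 2}$. Its spectrum is well known, with eigenvalue $0$ of multiplicity $n/2\geq 2$, so $G$ is not a nut graph. Therefore $n\geq d+4$ in the case $d\equiv_4 0$. In the case $d\equiv_4 2$ we already know $n\equiv_4 0$ and $d\equiv_4 2$, forcing $n-d\equiv_4 2$; combined with $n-d\neq 2$, this gives $n-d\geq 6$, i.e.\ $n\geq d+6$.

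The only subtle step, and the one to handle most carefully, is the first one: establishing that $P_\sigma\mathbf{x}=\pm\mathbf{x}$ for every automorphism and then organizing the $\pm1$-eigenvector entries into the bipartition $V^+\cup V^-$. Everything after that is a short double-count plus the cocktail-party exclusion, so no further obstacle is expected.
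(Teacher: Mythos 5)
Your proof is correct. Note that the paper itself gives no proof of Theorem~\ref{th-vtnut}: it is quoted from Fowler et al.~\cite{FGGPS}, so there is no internal argument to compare against; your write-up is a sound reconstruction of the standard one. Each step checks out: the one-dimensional kernel plus vertex-transitivity forces a $\pm1$ kernel vector, an automorphism carrying a $+$-vertex to a $-$-vertex forces $P_\sigma\mathbf{x}=-\mathbf{x}$ and hence $|V^+|=|V^-|=n/2$, the row equations give $d/2$ neighbours on each side, integrality of $|E(V^+)|=nd/8$ yields $4\mid n$ exactly when $d\equiv_4 2$, and the exclusion of $n=d+2$ via the cocktail-party graph $K_{n/2\times 2}$ (whose nullity is $n/2\geq 2$) combines with the parity/congruence constraints to give $n\geq d+4$, respectively $n\geq d+6$.
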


Using the recent catalogue of vertex-transitive graphs~\cite{HoRo},
they observed that a vertex-transitive nut graph exists for all pairs $(n,d)$ that satisfy the above conditions with $n\leq 42$.
Consequently, Fowler et al.~\cite[Question~11]{FGGPS} asked to characterize the pairs~$(n,d)$,
satisfying the requirements of Theorem~\ref{th-vtnut},
for which a vertex-transitive nut graph of order~$n$ and degree~$d$ exists.

Ba\v si\'c et al.~\cite{BaKS} approached this question by considering circulant nut graphs.
Recall that a graph~$G$ is {\em circulant} if its adjacency matrix~$A_G$ has the form
$$
A_G=
\begin{bmatrix}
a_0        & a_1 & a_2 & \dots & a_{n-1} \\
a_{n-1} & a_0 & a_1 & \dots & a_{n-2} \\
\vdots    & \vdots & \vdots &  & \vdots \\
a_1        & a_2 & a_3 & \dots & a_0
\end{bmatrix}.
$$
As we are interested in simple graphs,
this implies that $a_0=0$ and $a_j=a_{n-j}\in\{0,1\}$ for $j=1,\dots,n-1$.
A circulant graph may be more concisely described via the generator set~$S$
which contains the indices of those entries of the sequence $a_1,\dots,a_{\lfloor n/2\rfloor}$ that are equal to~$1$.
Hence for the given order~$n$ and the generator set~$S$,
we denote by $\Circ(n,S)$ the circulant graph with the vertex set $V=\{0,1,\dots,n-1\}$ and
the edge set
$$
E=\{(i,i\pm s)\colon i\in V, s\in S\},
$$
where $i\pm s$ is computed modulo~$n$.

Ba\v si\'c et al.~\cite{BaKS} proved that a 12-regular nut graph of order~$n$ exists if and only $n\geq 16$.
Thanks to the Fowler construction~\cite{FGGPS,GaPS}, 
which allows one to construct a $d$-regular nut graph of order~$n+2d$ from a $d$-regular nut graph of order~$n$,
it was sufficient to construct 12-regular nut graphs for $16\leq n\leq 39$.
For an even $n\in\{16,\dots,38\}$, 
the nut graphs were constructed as circulant graphs with the generator set either $\{1,\dots,6\}$ or $\{1,\dots,5,8\}$,
while for an odd $n\in\{17,\dots,39\}$, 
the nut graphs were obtained computationally by rewiring edges of circulant graphs.
They further proved the following result.

\begin{theorem}[Ba\v si\'c et al.~\cite{BaKS}]
\label{th-basic}
Let $n$ be even and $d$ divisible by~4.
Then $\Circ(n,\{1, \linebreak \dots,\frac d2\})$ is a nut graph if and only if
$\gcd(n, \frac d2+1)=1$ and $\gcd(\frac n2, \frac d4)=1$.
\end{theorem}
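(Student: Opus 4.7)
The plan is to compute the eigenvalues of $\Circ(n,\{1,\ldots,t\})$ with $t=d/2$ in closed form and then read off when $0$ is a simple eigenvalue with a nowhere-zero eigenvector. Writing $\omega=e^{2\pi i/n}$, the standard circulant formula gives $\lambda_j=\sum_{s=-t}^{t}\omega^{sj}-1$, and a short geometric-series calculation factorises this, for $j\neq 0$, as
$$
\lambda_j \;=\; \frac{\omega^{jt}-1}{\omega^j-1}\bigl(\omega^j+\omega^{-jt}\bigr),
$$
equivalently (as a Dirichlet kernel) $\sin((2t+1)\pi j/n)/\sin(\pi j/n)-1$. The product form is the convenient one for reading off vanishing.

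Two general observations then narrow the problem. Since $\lambda_j=\lambda_{n-j}$, any zero of $\lambda$ at an index $j\notin\{0,n/2\}$ automatically contributes multiplicity at least two, so a nut graph must have $j=n/2$ as its unique zero index; its eigenvector $(1,-1,1,-1,\ldots)$ is real and has no zero entries, so the eigenvector part of the nut-graph definition comes for free. Moreover the hypothesis $4\mid d$ (i.e.\ $t$ even) gives $\lambda_{n/2}=2\sum_{s=1}^t(-1)^s=0$ directly. Consequently, the theorem reduces to characterising when $\lambda_j\neq 0$ for every $j\in\{1,\ldots,n-1\}\setminus\{n/2\}$.

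Reading off from the factorisation, $\lambda_j=0$ for such $j$ if and only if one of
$$
\text{(a)}\ \ tj\equiv 0\pmod n, \qquad \text{(b)}\ \ (t+1)j\equiv n/2\pmod n
$$
holds. Both are satisfied at $j=n/2$ because $t$ is even, and the remaining task is to ensure no other $j$ in $\{1,\ldots,n-1\}$ satisfies either one. A routine gcd-count in $\Z/n\Z$ shows that the solutions of (a) in $\{1,\ldots,n-1\}$ are the nonzero multiples of $n/\gcd(n,t)$, so uniqueness of the solution $j=n/2$ is equivalent to $\gcd(n,t)=2$, i.e., $\gcd(n/2,d/4)=1$; an analogous count for the affine congruence (b) gives $\gcd(n,t+1)$ solutions modulo~$n$, and these collapse to the single solution $j=n/2$ precisely when $\gcd(n,t+1)=1$, i.e., $\gcd(n,d/2+1)=1$. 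Conjoining the two yields the stated characterisation.

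I expect the main technical care to lie in case~(b): one must keep in mind that $j=n/2$ is automatically a solution, verify that the affine congruence has exactly $\gcd(n,t+1)$ solutions in $\Z/n\Z$, and deduce that uniqueness is equivalent to this gcd being one. The remaining ingredients — the circulant spectrum, the product factorisation of $\lambda_j$, the $j\leftrightarrow n-j$ pairing, and the gcd analysis of case~(a) — are entirely standard.
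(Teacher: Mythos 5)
Your proof is correct and takes essentially the same route as the paper, which obtains this statement as the case $x=1$ of its Theorem~\ref{th-consecutive}: there, too, nutness is reduced to $P(\omega^j)\neq 0$ for $j\neq n/2$, the vanishing condition is factored as $(y^{d/2}-1)\,(y^{d/2+1}+1)=0$ (your $\frac{y^{t}-1}{y-1}\bigl(y+y^{-t}\bigr)$ is the same factorisation up to the unit $y^{t}(y-1)$), and each factor is handled by the same gcd analysis. The only differences are cosmetic: you derive the factorisation via the symmetric geometric (Dirichlet-kernel) sum and count solutions of the two congruences, whereas the paper adds the ``missing middle block'' of powers and, when a gcd exceeds its target, exhibits an explicit bad index $j=n/(2b)$.
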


Ba\v si\'c et al.~\cite{BaKS} further conjectured that
a 12-regular circulant nut graph exists for every even $n\geq 16$ (\!\!\cite[Conjecture~9]{BaKS})
and, more generally, that
a $d$-regular circulant nut graph exists for every even $n\geq d+4$ and every~$d$ divisible by~$4$ (\!\!\cite[Conjecture~10]{BaKS}).

We are interested here in the properties of circulant nut graphs.
In the next section, we briefly review classical results on eigenvalues and eigenvectors of circulant graphs,
which we use to show that the generator set of a circulant nut graph 
necessarily contains equally many even and odd integers (Lemma~\ref{le-equally-many}).
In Section~\ref{sc-basic-generalization}
we characterize circulant nut graphs with the consecutive generator set $\{x,x+1,\dots,x+2t-1\}$ for $x,t\in\N$,
which generalizes Theorem~\ref{th-basic}.
In Section~\ref{sc-almost-consecutive}
we further study circulant graphs with the almost consecutive generator set $\{1,\dots,2t+1\}\setminus\{t\}$,
which yields nut graphs for all even $n\geq 4t+4$
whenever $t$ is odd and such that $t\not\equiv_{10} 1$ and $t\not\equiv_{18} 15$.
In particular, the case $t=3$ resolves Conjecture~9 of Ba\v si\'c et al.~\cite{BaKS}.
In Section~\ref{sc-other-generators} we further study the existence of $4t$-regular circulant nut graphs for small values of~$t$, 
which partially answers Question~11 of Fowler et al.~\cite{FGGPS} 
and also partially resolves Conjecture~10 of Ba\v si\'c et al.~\cite{BaKS}.

\section{Preliminaries}
\label{sc-preliminaries}

A classical result in linear algebra (see, e.g., \cite[Section~3.1]{Gray}) states that
the circulant matrix~$A$ with the first row $(a_0,\dots,a_{n-1})$ has the eigenvalues 
$$
P(1), P(\omega), \dots, P(\omega^{n-1}),
$$
where $\omega=e^{i \frac{2\pi}n}$ is the $n$-th root of unity and 
$$
P(y) = a_0 + a_1 y + a_2 y^2 + \cdots + a_{n-1} y^{n-1}.
$$
Moreover, for $j=0,\dots,n-1$,
$$
x_j = (\omega^j, \omega^{2j}, \omega^{3j}, \dots, \omega^{nj})
$$
is an eigenvector corresponding to the eigenvalue~$P(\omega^j)$.

Let $G=\Circ(n,S)$ be a circulant graph of order~$n$ with the generator set $S=\{s_0,\dots,s_{k-1}\}$,
with $1\leq s_0<\dots<s_{k-1}\leq n/2$.
We assume that $S$ is non-empty, as otherwise $G$ has no edges 
and all eigenvalues of its adjacency matrix are zeros.
Let $\lambda_0,\lambda_1,\dots,\lambda_{n-1}$ be the eigenvalues of~$A_G$.
Here $\lambda_0=P(1)$ is the degree of~$G$,
while for the remaining eigenvalues we have 
$$
\lambda_j=P(\omega^j)=P(\omega^{n-j})=\lambda_{n-j}
$$ 
for $j=1,\dots,n-1$.
Namely, 
if $s_{k-1} < \frac n2$, from the equality $\omega^{n-j} = \omega^{-j}$ we obtain
\begin{align}
\label{eq-prva}
\lambda_j 
         = P\left(\omega^j\right) 
       &= \left( \omega^{j s_0} + \frac{1}{\omega^{j s_0}} \right) 
         + \cdots 
         + \left( \omega^{j s_{k-1}} + \frac{1}{\omega^{j s_{k-1}}} \right), \\
\label{eq-druga}
\lambda_{n-j}
         = P\left(\omega^{n-j}\right) 
       &= \left( \omega^{j s_0} + \frac{1}{\omega^{j s_0}} \right) 
         + \cdots 
         + \left( \omega^{j s_{k-1}} + \frac{1}{\omega^{j s_{k-1}}} \right).
\end{align}
On the other hand, if $n$ is even and $s_{k-1} = \frac n2$, 
we have
\begin{align}
\label{eq-treca}
\lambda_j
         = P\left(\omega^j\right) 
       &= \left( \omega^{j s_0} + \frac{1}{\omega^{j s_0}} \right) 
         + \cdots 
         + \left( \omega^{j s_{k-2}} + \frac{1}{\omega^{j s_{k-2}}} \right) 
         + \omega^{j s_{k-1}}, \\
\label{eq-cetvrta}
\lambda_{n-j}
         = P\left(\omega^{n-j}\right) 
       &= \left( \omega^{j s_0} + \frac{1}{\omega^{j s_0}} \right) 
         + \cdots 
         + \left( \omega^{j s_{k-2}} + \frac{1}{\omega^{j s_{k-2}}} \right) 
         + \omega^{j s_{k-1}}.
\end{align}
The term $\frac{1}{\omega^{j s_{k-1}}}$ is omitted in (\ref{eq-treca}) and~(\ref{eq-cetvrta}) 
due to $n - s_{k-1} = \frac n2 = s_{k-1}$. 

The equality $\lambda_j=\lambda_{n-j}$ for $j=1,\dots,n-1$ establishes the following lemma.
\begin{lemma}
\label{le-even-order}
If $G=\Circ(n,S)$ is a nut graph and $n\geq 2$, 
then $n$ is even, $\lambda_{n/2}=P(\omega^{n/2}) = 0$ and 
$\lambda_j=P(\omega^j) \neq 0$ for all $j\in \{0, 1, \ldots, n-1\} \setminus \{n/2\}$.
\end{lemma}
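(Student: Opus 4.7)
The plan is to exploit the eigenvalue pairing $\lambda_j=\lambda_{n-j}$ for $j=1,\dots,n-1$, which was just established in equations~(\ref{eq-prva})--(\ref{eq-cetvrta}), together with the very restrictive nut-graph hypothesis that $0$ is an eigenvalue of multiplicity exactly $1$.

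First I would dispose of the eigenvalue $\lambda_0=P(1)$. Since $G$ is a nut graph on $n\geq 2$ vertices, its adjacency matrix is not the zero matrix (otherwise $0$ would have multiplicity $n\geq 2$). Hence $S$ is non-empty, and $\lambda_0$ equals the degree $2|S|$ or $2|S|-1$, which is strictly positive. So the eigenvalue $0$ must occur among $\lambda_1,\dots,\lambda_{n-1}$.

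Next I would show that $n$ must be even. If $n$ were odd, then for every $j\in\{1,\dots,n-1\}$ we would have $j\ne n-j$, so the indices $\{1,\dots,n-1\}$ split into disjoint pairs $\{j,n-j\}$ on which $\lambda_j=\lambda_{n-j}$ holds. Any zero eigenvalue occurring here would therefore appear with multiplicity at least~$2$, contradicting the nut-graph condition. Thus $n$ is even.

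Finally, with $n$ even, the index $n/2$ is the unique element of $\{1,\dots,n-1\}$ fixed by $j\mapsto n-j$, while the remaining indices again split into disjoint pairs $\{j,n-j\}$ with $\lambda_j=\lambda_{n-j}$. Combined with $\lambda_0\ne 0$, the multiplicity of $0$ among all eigenvalues is $\varepsilon+2m$, where $\varepsilon\in\{0,1\}$ counts whether $\lambda_{n/2}=0$ and $m$ counts the number of paired indices $\{j,n-j\}$ with $\lambda_j=0$. The nut-graph condition forces this total to equal $1$, so $\varepsilon=1$ and $m=0$; that is, $\lambda_{n/2}=0$ and $\lambda_j\ne 0$ for every other $j$. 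I do not foresee any real obstacle here: everything reduces to a parity count once the pairing from the preliminaries is in hand.
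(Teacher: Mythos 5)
Your proof is correct and takes essentially the same route as the paper: both arguments rest on the pairing $\lambda_j=\lambda_{n-j}$ from the preliminaries together with the multiplicity-one condition, ruling out zero eigenvalues at paired indices, forcing $n$ even, and leaving $\lambda_{n/2}$ as the only possible (hence actual) zero eigenvalue. Your explicit parity count $\varepsilon+2m=1$ is just a slightly more formal packaging of the paper's case analysis.
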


\begin{proof}
Clearly, $\lambda_0=P(1) \neq 0$. 
For all $j \in \{1, 2, \ldots, n-1\}\setminus\{n/2\}$, 
we know that $n-j \in \{1, 2, \ldots, n-1\}\setminus\{n/2\}$ with $j \neq n-j$. 
If $\lambda_j=P\left(\omega^j\right)=0$, then $\lambda_{n-j}=P\left(\omega^{n-j}\right) = 0$, 
so that the kernel of the adjacency matrix~$A_G$ is at least $2$-dimensional,
and $G$ would not be nut.
Hence we must have $\lambda_j=P\left(\omega^j\right) \neq 0$ for all $j \neq n/2$. 

If $n$ is odd, then $n/2$ is not an integer, and so the kernel of $A_G$ is $0$-dimensional, and $G$ would again not be nut.
Hence $n$ must be even. 

Finally, we know that $G$ is nut, and
so it must have an eigenvalue equal to~0.
The only remaining possibility is that $\lambda_{n/2}=P\left(\omega^{n/2}\right)=0$.
\end{proof}

Due to $P(\omega^{n/2})=0$ and $P(\omega^j)\neq 0$ for $j\neq n/2$, 
the previous lemma implies that the eigenspace of~$0$ in a circulant nut graph
is spanned by the vector $x_{n/2}=(-1,1,\dots,-1,1)$.
This was also proved in~\cite[Lemma 6]{BaKS}, although in a different manner.

A more careful analysis of Eqs. (\ref{eq-prva})--(\ref{eq-cetvrta}) implies
a necessary condition for the generator set of a circulant nut graph.

\begin{lemma}
\label{le-equally-many}
If $G=\Circ(n,S)$ is a nut graph with $n\geq 2$,
then for some $t>0$ the set $S$ consists of $t$~odd and $t$~even integers from $\{1,\dots,n/2-1\}$.
Consequently, $G$ is $4t$-regular.
\end{lemma}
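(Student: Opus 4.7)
The plan is to exploit Lemma~\ref{le-even-order}, which forces $P(\omega^{n/2}) = 0$. Since $\omega^{n/2} = e^{i\pi} = -1$, this is simply the condition $P(-1) = 0$. I would compute $P(-1)$ explicitly via the expressions (\ref{eq-prva})--(\ref{eq-cetvrta}): each pair $\omega^{js} + \omega^{-js}$ evaluated at $j = n/2$ becomes $2(-1)^s$, while a lone term $\omega^{j s_{k-1}}$ (present when $n/2 \in S$) becomes $(-1)^{n/2}$.

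The first step is to exclude $n/2 \in S$ by a parity argument. If $n/2 \in S$, then
$$P(-1) = 2\sum_{s \in S \setminus\{n/2\}} (-1)^s + (-1)^{n/2},$$
which is odd and therefore non-zero, contradicting Lemma~\ref{le-even-order}. Hence $n/2 \notin S$, so $S \subseteq \{1, \dots, n/2 - 1\}$, and the relevant formula is (\ref{eq-prva}).

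With $n/2 \notin S$, the equation $P(-1) = 0$ reduces to
$$\sum_{s \in S}(-1)^s = 0,$$
which says exactly that $S$ contains equally many odd and even integers; denote this common count by $t$. I would then verify $t \geq 1$: otherwise $S = \emptyset$, making $G$ edgeless with $0$ as an eigenvalue of multiplicity $n \geq 2$, contradicting the nut hypothesis. Finally, since $S \subseteq \{1, \dots, n/2 - 1\}$, every element of $S$ contributes two edges at each vertex, so the degree is $2|S| = 4t$.

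The only step with genuine content is the parity exclusion of $n/2 \in S$, which rests on the observation that a single $(-1)^{n/2}$ cannot be cancelled by the otherwise even-valued sum; the rest of the argument is direct substitution and counting. If anything unexpected arises, it will be in tracking the case distinction from (\ref{eq-prva})--(\ref{eq-cetvrta}) carefully, but no deeper tool than $P(-1) = 0$ should be needed.
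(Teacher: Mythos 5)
Your proposal is correct and follows essentially the same route as the paper: invoke Lemma~\ref{le-even-order} to get $P(\omega^{n/2})=0$, rule out $n/2\in S$ by the parity (oddness) of the resulting sum, and then read off from $\sum_{s\in S}(-1)^s=0$ that $S$ has equally many odd and even elements, giving degree $4t$. Your extra checks (that $t\geq 1$ and the explicit degree count) are fine and only make explicit what the paper handles via its standing assumption that $S$ is non-empty.
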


\begin{proof}
Let $S=\{s_0,\dots,s_{k-1}\}$ with $1\leq s_0<\dots<s_{k-1}\leq n/2$.
If $s_{k-1} = n/2$, then from Eq.~(\ref{eq-treca}) and Lemma~\ref{le-even-order} we have
$$
0 = P\left(\omega^{n/2}\right) 
   = \left(\omega^{s_0n/2} + \frac{1}{\omega^{s_0n/2}} \right) 
   + \cdots 
   + \left(\omega^{s_{k-2}n/2} + \frac{1}{\omega^{s_{k-2}n/2}} \right) 
   + \omega^{\frac n2 \cdot \frac n2},
$$
which by $\omega^{n/2}=-1$ reduces to
$$
0 = 2 \left[ (-1)^{s_0} + \cdots + (-1)^{s_{k-2}} \right] + (-1)^{n/2}.
$$   
The above equality cannot hold as the right-hand side is odd.
Hence $s_{k-1} < n/2$. 
From Eq.~(\ref{eq-prva}) we then have
$$
0 = P\left(\omega^{n/2}\right)
   = \left( \omega^{s_0n/2} + \frac{1}{\omega^{s_0n/2}} \right) 
   + \cdots 
   + \left( \omega^{s_{k-1}n/2} + \frac{1}{\omega^{s_{k-1}n/2}} \right)
$$
which reduces to
$$
0 = 2 \left[ (-1)^{s_0} + \cdots + (-1)^{s_{k-1}} \right].
$$
This equality implies that there are equally many odd and even elements in the set $\{s_0, s_1, \ldots, s_{k-1}\}$.
Hence $k=2t$ for some $t\in\N$ and the lemma follows. 
\end{proof}

%Remarks with respect to Basic et al.: 
%proved that there is no circulant nut graph of degree $d$ if $d\equiv 2\pmod 4$,
%but did not observe that $S$ has to have equally many odd and even numbers.

\begin{remark}
\label{re-pet}
%We recall here that a circulant graph $G=\Circ(n,S)$, where $S=\{s_0,\dots,s_{k-1}\}$, is connected
%if and only if $\gcd(n,s_0,\dots,s_{k-1})=1$ (see \cite{BoTi} and also \cite[Theorem 4.2]{Meij}).
We note here that
if the generator set~$S$ of a circulant graph $G=\Circ(n,S)$
consists of $t$ odd and $t$ even integers from $\{1,\dots, n/2-1\}$,
then $G$ has $(-1,1,\dots,-1,1)$ as an eigenvector of the eigenvalue $0=P(\omega^{n/2})$.
Since this vector has no zero entries,
a necessary and sufficient condition for $G$ to be a nut graph
is that the eigenvalue~$0$ has multiplicity~$1$,
i.e., that $P(\omega^j)\neq 0$ for $j\neq n/2$.
Further, 
$P(1)=4t>0$, 
while $P(\omega^j)=P(\omega^{n-j})$ for $j=1,\dots, n-1$, implies that 
$G$ is a nut graph if and only if % it is connected and
$P(\omega^j)\neq 0$ for each $j\in\{1, \ldots, n/2-1\}$.
We will rely extensively on this remark in the following sections.
\end{remark}

\section{Circulant nut graphs with a consecutive generator set}
\label{sc-basic-generalization}

Ba\v si\'c et al.~\cite{BaKS} showed that
the circulant graph $\Circ(n,\{1,2,\dots,d/2\})$, for even $n$ and $d\equiv_4 0$,
is a nut graph if and only if $\gcd(\frac d2+1,n)=\gcd(\frac d4, \frac n2)=1$.
In this section we generalize this result to circulant graphs
with the generator set $\{x,x+1,\dots,x+2t-1\}$.

\begin{theorem}
\label{th-consecutive}
Let $G=\Circ(n, \{x, x+1, \ldots, x+2t-1\})$ for $n, x, t\in\N$, 
such that $n$ is even and $n\geq 2x+4t$.
Then $G$ is a nut graph if and only if $\gcd(\frac n2,t)=\gcd(\frac n2, 2x+2t-1)=1$.
\end{theorem}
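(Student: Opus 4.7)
The plan is to apply Remark~\ref{re-pet} and analyze directly when $P(\omega^j)$ vanishes on $\{1,\dots,n/2-1\}$. First, since $\{x,x+1,\dots,x+2t-1\}$ is a block of $2t$ consecutive positive integers, it automatically contains exactly $t$ odd and $t$ even values; and the hypothesis $n\geq 2x+4t$ gives $x+2t-1\leq n/2-1$, so every element of $S$ lies strictly below $n/2$. By Remark~\ref{re-pet}, $G$ is a nut graph if and only if $P(\omega^j)\neq 0$ for every $j\in\{1,\dots,n/2-1\}$.

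Next, I would derive the closed form
\[
P(\omega^j)=\frac{2\sin(2\pi jt/n)\,\cos\bigl(\pi j(2x+2t-1)/n\bigr)}{\sin(\pi j/n)}.
\]
This follows from $P(\omega^j)=2\sum_{k=0}^{2t-1}\cos\bigl(2\pi j(x+k)/n\bigr)$ by summing the associated geometric progression $\sum_{k=0}^{2t-1}e^{i\theta(x+k)}$ with ratio $e^{i\theta}$ (where $\theta=2\pi j/n$), factoring out the midpoint exponential $e^{i\theta(2x+2t-1)/2}$, and taking the real part. Since $\sin(\pi j/n)\neq 0$ throughout the range of interest, $P(\omega^j)=0$ if and only if at least one of the two numerator factors vanishes.

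Third, I would treat the two numerator factors in turn. The sine factor $\sin(2\pi jt/n)$ vanishes iff $(n/2)\mid jt$, whose least positive solution is $(n/2)/\gcd(n/2,t)$; this lies in $\{1,\dots,n/2-1\}$ iff $\gcd(n/2,t)>1$. For the cosine factor, set $m=2x+2t-1$ (which is odd). Its vanishing requires $2jm/n$ to be an \emph{odd} integer. A short 2-adic check---using $\gcd(n,2m)=2\gcd(n/2,m)$ together with the observation that $m/\gcd(n/2,m)$ is odd since $m$ is odd---shows that this is equivalent to $j$ being an odd multiple of $(n/2)/\gcd(n/2,m)$, and the smallest such $j$ lies in $\{1,\dots,n/2-1\}$ iff $\gcd(n/2,2x+2t-1)>1$. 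Combining both cases, $P(\omega^j)\neq 0$ for every $j\in\{1,\dots,n/2-1\}$ precisely when $\gcd(n/2,t)=\gcd(n/2,2x+2t-1)=1$, proving the equivalence.

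The main obstacle is the cosine analysis, where the parity requirement must be correctly layered on top of the divisibility requirement. The key observation is that, because $m=2x+2t-1$ is odd, the quotient $m/\gcd(n/2,m)$ is odd as well, so the oddness of $2jm/n$ is governed entirely by the multiplier $j/\bigl((n/2)/\gcd(n/2,m)\bigr)$; once this is pinned down, the counting of solutions in $\{1,\dots,n/2-1\}$ becomes a routine gcd computation, and the generalization of Theorem~\ref{th-basic} is recovered upon specializing $x=1$.
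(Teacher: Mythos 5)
Your proposal is correct and follows essentially the same route as the paper: your closed-form product $\frac{2\sin(2\pi jt/n)\cos(\pi j(2x+2t-1)/n)}{\sin(\pi j/n)}$ is exactly the paper's factorization of the condition $P(\omega^j)=0$ into $\left(y^{2t}-1\right)\left(y^{2x+2t-1}+1\right)=0$ written in real trigonometric form, obtained by the same geometric-series summation. The subsequent divisibility analysis (the sine factor vanishing iff $\frac n2\mid jt$, and the cosine factor vanishing iff $(2x+2t-1)j$ is an odd multiple of $\frac n2$, with the oddness of $2x+2t-1$ and the gcd computation handled just as in the paper) matches the paper's argument step for step.
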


\begin{proof}
As the generator set consists of $2t$~consecutive integers,
it contains $t$ odd and $t$ even integers,
so in line with Remark~\ref{re-pet},
% $\Circ(n, \{x, x+1, \ldots, x+2t-1\})$
$G$ is a nut graph if and only if $P(\omega^j)\neq 0$ for $j\in\{1,\dots,\frac n2-1\}$.
    
Let $j\in\{1, \dots, n/2-1\}$ and $y=\omega^j$.
Then $P(\omega^j)=0$ is equivalent to
\begin{align*}
0 &=
   \left( y^x + \frac{1}{y^x} \right) 
+ \left( y^{x+1} + \frac{1}{y^{x+1}} \right) 
+ \cdots 
+ \left( y^{x+2t-1} + \dfrac{1}{y^{x+2t-1}} \right) \\
   &= \frac{1}{y^{x+2t-1}} + \cdots + \dfrac{1}{y^x} + y^x + \cdots + y^{x+2t-1}.
\end{align*}
Adding $\dfrac{1}{y^{x-1}} + \cdots + \dfrac 1y + 1 + y + \cdots + y^{x-1}$ to both sides, 
we get
\begin{align*}
&& \dfrac{1}{y^{x-1}} + \cdots + y^{x-1}
     &= \dfrac{1}{y^{x+2t-1}} + \cdots + y^{x+2t-1} \\
&\Leftrightarrow&
     \dfrac{1}{y^{x-1}} \left( 1 + y + \cdots + y^{2x-2} \right)
     &= \dfrac{1}{y^{x+2t-1}} \left( 1 + y + \cdots + y^{2x+4t-2} \right) .
\end{align*}
Since $1\leq j\leq n/2-1$, we have $y=\omega^j\neq 1$,
and we further obtain the equivalences
\begin{align*}
&& \dfrac{1}{y^{x-1}} \cdot \dfrac{y^{2x-1} - 1}{y-1}
  &= \dfrac{1}{y^{x+2t-1}} \cdot \dfrac{y^{2x+4t-1} - 1}{y-1} \\
%\dfrac{y^{2x+4t-1}-1}{y^{x+2t-1}} 
%  &= \dfrac{y^{2x-1}-1}{y^{x-1}} \\
&\Leftrightarrow&
y^{2t} \left( y^{2x-1} - 1 \right)
  &= y^{2x+4t-1} - 1 \\
%y^{2x+4t-1} - 1 
%  &= y^{2x + 2t - 1} - y^{2t} \\
&\Leftrightarrow&
0 &= y^{2x+4t-1} - y^{2x + 2t - 1} + y^{2t} - 1 \\
&\Leftrightarrow&
0 &= \left( y^{2t} - 1 \right) \left( y^{2x + 2t - 1} + 1 \right).
\end{align*}
Since $y = \omega^j$, 
we have that $y^{2t} - 1 = 0$ is equivalent to $n\mid2jt$, i.e., to $\frac n2 \mid jt$ as $n$ is even. 
Let $b\!=\!\gcd(\frac n2,t)$.
If $b>1$, then $j=\frac n{2b}$ is a positive integer smaller than $\frac n2$.
Hence $jt = \frac n{2b}t = \frac n2 \cdot \frac tb$, where $\frac tb$ is also an integer, so that $\frac n2\mid jt$. 
This implies that $P(\omega^j)=0$ for $j<\frac n2$ and hence $G$ is not a nut graph.
On the other hand,
if $\gcd(\frac n2,t)=1$, then $\frac n2\mid jt$ implies $\frac n2\mid j$, 
which is not possible for $j\in\{1,\ldots, \frac n2 -1\}$, 
and so $P(\omega^j)\neq 0$ for $j<\frac n2$.

Similarly, $y^{2x+2t-1} = \omega^{(2x+2t-1)j}$ implies that 
$y^{2x+2t-1} + 1 = 0$ is equivalent to $2x + 2t - 1$ being a multiple of~$\frac n2$, but not of~$n$.
Let $b=\gcd(\frac n2, 2x+2t-1)$.
Since $2x+2t-1$ is odd, $b$ is also odd. 
If $b>1$, then $j=\frac n{2b}$ is a positive integer smaller than $\frac n2$.
Hence $(2x+2t-1)j = (2x+2t-1) \frac n{2b} = \frac{2x+2t-1}{b} \cdot \frac n2$,
and since $\frac{2x+2t-1}{b}$ is odd,
$(2x+2t-1)j$ is a multiple of $\frac n2$, but not of~$n$.
Hence $P(\omega^j)=0$ for $j<\frac n2$ and $G$ is not a nut graph.
On the other hand, 
if $\gcd(\frac n2, 2x+2t-1)=1$
then $\frac n2\mid(2x+2t-1)j$ implies $\frac n2\mid j$, 
which is not possible for $j\in\{1,\dots,\frac n2 -1\}$,
and so $P(\omega^j)\neq 0$ for $j<\frac n2$.

To conclude, if either $\gcd(\frac n2, t)>1$ or $\gcd(\frac n2, 2x+2t-1)>1$,
then $P(\omega^j)=0$ for some $j\in\{1,\dots, \frac n2 -1\}$, 
implying that $G$ is not a nut graph.
On the other hand, if $\gcd(\frac n2, t)=\gcd(\frac n2, 2x+2t-1)=1$,
then $P(\omega^j)\neq 0$ for each $j\in\{1,\dots,\frac n2 -1\}$,
so that $G$ is a nut graph in this case.
\end{proof}

\section{Circulant nut graphs with almost consecutive generator sets}
\label{sc-almost-consecutive}

Ba\v si\'c et al.~\cite[Conjecture~9]{BaKS} conjectured that
for every even $n\geq 16$ there exists a 12-regular circulant nut graph $\Circ(n,\{a_1,\dots,a_6\})$.
Our computational results actually reveal that the single generator set $S_3=\{1,2,4,5,6,7\}$ 
provides a positive answer to this conjecture for all even $n\geq 16$.
For this particular generator set, we have
$$
P(y) = \left(y+\frac1y\right)+\left(y^2+\frac1{y^2}\right)+\left(y^4+\frac1{y^4}\right)
       + \left(y^5+\frac1{y^5}\right)+\left(y^6+\frac1{y^6}\right)+\left(y^7+\frac1{y^7}\right).
$$
By Remark~\ref{re-pet},
$\Circ(n,\{1,2,4,5,6,7\})$ for an even $n\geq 16$ is a nut graph 
if and only if 
$$
P(\omega^j) = P(e^{2 i\pi\frac jn})\neq 0\quad\mbox{ for each }j\in\{1,\dots,n/2-1\}.
$$
Since $\omega^j\neq0$, the above condition is equivalent to
\begin{equation}
\label{eq-P*}
P^*(e^{2i\pi\frac jn})\neq 0\quad\mbox{ for each }j\in\{1,\dots,n/2-1\},
\end{equation}
where 
$$
P^*(y) = y^7P(y)=y^{14}+y^{13}+y^{12}+y^{11}+y^9+y^8+y^6+y^5+y^3+y^2+y+1
$$
is obtained by multiplying~$P(y)$ with $y^{\max(S_3)}$.

Recall now that the $n$-th cyclotomic polynomial~$\Phi_n(y)$ (see, e.g., \cite{cyclo}) is 
the minimal polynomial in~$\Q[y]$ for the primitive roots of unity $e^{2i\pi\frac jn}$ with $\gcd(j,n)=1$.
$\Phi_n(y)$ is the unique irreducible polynomial with integer coefficients 
that divides $y^n-1$, but does not divide $y^k-1$ for any $k<n$, and
may be computed recursively via
\begin{equation}
\label{eq-cyclotomic}
\Phi_n(y) = \frac{y^n-1}{\prod_{d\mid n, 1\leq d<n} \Phi_d(y)}
\end{equation}
(from where $\Phi_1(y)=y-1$, $\Phi_2(y)=y+1$, $\Phi_3(y)=y^2+y+1$, \dots).
Conversely, the M\"obius inversion formula yields
\begin{equation}
\label{eq-cyclotomic-2}
\Phi_n(y) = \prod_{d\mid n} (y^d-1)^{\mu(n/d)}.
\end{equation}

Hence if $P(e^{2i\pi\frac jn})=0$ for some $j\in\{1,\dots,n/2-1\}$,
then the primitive root of unity $e^{2i\pi\frac ab}$ for $a=\frac j{\gcd(j,n)}$, $b=\frac n{\gcd(j,n)}$ is the root of $P^*(y)$,
and consequently 
$$
\Phi_b(y)\mid P^*(y).
$$
The degree of $\Phi_b(y)$ is $\phi(b)$, the Euler's totient function,
so $\Phi_b(y)\mid P^*(y)$ also implies that 
$$
\phi(b)\leq\deg P^*(y)=14.
$$
Hence to check that $P^*(y)$ has no root of the form $e^{2i\pi\frac jn}$ with $1\leq j\leq n/2-1$,
and consequently, that $\Circ(n,\{1,2,4,5,6,7\})$ is a nut graph for each even $n\geq 16$,
it suffices to check that $P^*(y)$ is not divisible by~$\Phi_b(y)$ for any~$b\geq 3$ such that $\phi(b)\leq14$.
(Since $j<n/2$ we have $b=\frac n{\gcd(j,n)}>\frac n{n/2}$.)
We can use the lower bound~\cite[Theorem~15]{RoSc}
\begin{equation}
\label{eq-phi-lower-bound}
\frac b{e^\gamma \log\log b + \frac{2.51}{\log\log b}} < \phi(b),
\end{equation}
valid for $b\geq 3$, where $\gamma=0.57721\dots$ is the Euler's constant,
to determine all integers~$b$ such that $\phi(b)\leq 14$.
Namely, in such case $\frac b{e^\gamma \log\log b + \frac{2.51}{\log\log b}} < 14$,
and a simple iterative method for approximating the fixed point of
$f_{14}(b) = 14\left(e^\gamma \log\log b + \frac{2.51}{\log\log b}\right)$
yields in a handful of iterations that 
$$
\phi(b)\leq 14\quad\Rightarrow\quad b\leq 60.
$$

The following table lists the remainders obtained after dividing~$P^*(y)$ by the cyclotomic polynomials~$\Phi_b(y)$ 
for those values $3\leq b\leq 60$ for which indeed $\phi(b)\leq 14$.
\begin{center}
{\footnotesize
\begin{tabular}{rl}
\toprule 
$b$ & $P^*(y) \pmod{\Phi_b(y)}$ \\ 
\midrule
3 & $-3y$ \\
4 & $2y$ \\
5 & $y^{3} + y$ \\
6 & $3y$ \\
7 & $-y^{4} - y^{3}$ \\
8 & $2y^{3} - y^{2} + 1$ \\
9 & $y^{5} + y^{4} + y^{3} + y^{2} + y + 1$ \\
10 & $y^{3} + 2y^{2} + y$ \\
11 & $y^{9} + y^{8} + y^{6} + y^{5} + 2y^{3} + 2y^{2} + 2y + 2$ \\
12 & $y^{2} + 2y + 1$ \\
13 & $-y^{10} - y^{7} - y^{4} + y + 1$ \\
14 & $-y^{4} + y^{3} + 2$ \\
15 & $-y^{7} + y^{5} - y^{4} + 1$ \\
16 & $-y^{4} + y^{2}$ \\
18 & $y^{5} - y^{4} + y^{3} - y^{2} + y - 1$ \\
20 & $y^{7} + 2y^{6} - 2y^{4} + y^{3} + y^{2} - y$ \\
21 & $3y^{11} - y^{10} + 3y^{8} - 2y^{7} - y^{6} + 2y^{5} + y^{4} - y^{3} + 2y^{2} + 2y - 2$ \\
22 & $y^{9} + y^{8} + y^{6} + y^{5}$ \\
24 & $y^{7} + y^{6} + 2y^{5} + y^{4} - y - 1$ \\
26 & $2y^{11} - y^{10} + 2y^{9} + y^{7} + 2y^{5} - y^{4} + 2y^{3} + y - 1$ \\
28 & $2y^{11} + y^{10} + y^{7} + 2y^{6} - y^{4} + 2y^{3} + 2y^{2} - 1$ \\
30 & $y^{7} + 4y^{6} + 3y^{5} + y^{4} - 2y^{2} - 2y + 1$ \\
36 & $y^{11} + y^{9} + 2y^{8} + y^{7} + 2y^{6} + y^{5} + y^{3}$ \\
42 & $y^{11} + y^{10} + 2y^{9} + y^{8} + y^{6} + 2y^{5} + y^{4} + y^{3}$ \\
\bottomrule
\end{tabular}
}
\end{center}
Since $P^*(y)$ is apparently not divisible by any of these cyclotomic polynomials, we arrive at the following proposition.

\begin{proposition}
The circulant graph $\Circ(n,\{1,2,4,5,6,7\})$ is a nut graph for each even $n\geq 16$.
\end{proposition}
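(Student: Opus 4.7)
The plan is to apply Remark~\ref{re-pet}. Since $S_3=\{1,2,4,5,6,7\}$ splits into three odd and three even integers, the graph $\Circ(n,S_3)$ for even $n\geq 16$ is a nut graph if and only if the Laurent polynomial $P(y)$ has no zero of the form $\omega^j=e^{2\pi i j/n}$ with $j\in\{1,\dots,n/2-1\}$. Multiplying by $y^7$ converts this into the statement that the integer polynomial $P^*(y)=y^7P(y)$, of degree $14$, does not vanish at any such root of unity.

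Next I would parametrize each such root by its primitive order. For $j\in\{1,\dots,n/2-1\}$ set $b=n/\gcd(j,n)$, so that $\omega^j$ is a primitive $b$-th root of unity. A vanishing $P^*(\omega^j)=0$ forces the minimal polynomial $\Phi_b(y)$ over $\Q$ to divide $P^*(y)$ in $\Z[y]$, which in particular requires $\phi(b)=\deg\Phi_b\leq\deg P^*=14$. Moreover, since $j<n/2$ and $\gcd(j,n)$ divides both $j$ and $n$, the divisor $\gcd(j,n)$ is strictly less than $n/2$, so $b\geq 3$. It therefore suffices to rule out $\Phi_b(y)\mid P^*(y)$ for every integer $b\geq 3$ with $\phi(b)\leq 14$.

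To make this finite, I would invoke the Rosser--Schoenfeld bound~\eqref{eq-phi-lower-bound}: a short fixed-point iteration on $f_{14}(b)=14\bigl(e^{\gamma}\log\log b+\tfrac{2.51}{\log\log b}\bigr)$ shows that $\phi(b)\leq 14$ implies $b\leq 60$. For each $b$ in $\{3,\dots,60\}$ meeting the totient constraint, I would then carry out polynomial division of $P^*(y)$ by $\Phi_b(y)$ and check that the remainder is nonzero; the displayed table lists exactly these remainders. Since none of them vanishes, no $\Phi_b(y)$ can divide $P^*(y)$, no $P^*(\omega^j)$ is zero for $j\in\{1,\dots,n/2-1\}$, and the proposition follows.

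I expect the only genuine obstacle to be bookkeeping: the argument is conceptually a single reduction (vanishing at an $n$-th root of unity forces divisibility by some cyclotomic factor), but executing it requires performing roughly two dozen polynomial divisions against cyclotomic polynomials of degree up to $14$. In practice this is best done with a computer algebra system, after which verifying that not a single remainder is the zero polynomial is immediate from the tabulated output.
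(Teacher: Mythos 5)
Your proposal is correct and follows essentially the same route as the paper: reduce via Remark~\ref{re-pet} to non-vanishing of $P^*(y)=y^7P(y)$ at roots of unity of order $b\geq 3$, bound $b\leq 60$ using the Rosser--Schoenfeld estimate on $\phi$, and verify by explicit division that no cyclotomic polynomial $\Phi_b(y)$ with $\phi(b)\leq 14$ divides $P^*(y)$. The only remaining work is the finite computation already recorded in the table of remainders.
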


The generator set $S_3=\{1,2,4,5,6,7\}$ is a particular instance of the general generator set 
$$
S_t=\{1,\dots,2t+1\}\setminus\{t\},
$$
that provides many further examples of circulant nut graphs.
However, let us first showcase the opposite (and simpler) situation when $S_t$ does not yield a circulant nut graph.
It is obvious that for an even~$t$ the generator set~$S_t$ contains more odd than even entries,
and then $\Circ(n,S_t)$ is not a nut graph by Lemma~\ref{le-equally-many} for any even $n\geq 4t+4$.
Two further cases are covered by the following lemma.

\begin{lemma}
\label{le-2t+1-t}
The circulant graph $\Circ(n,S_t)$ for an even $n\geq 4t+4$ is not a nut graph if either:
a) $t\equiv_{10} 1$ and $5 \mid n$, or 
b) $t\equiv_{18} 15$ and $9 \mid n$.
\end{lemma}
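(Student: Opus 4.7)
The plan is to apply Remark~\ref{re-pet}. Since $t$~is odd in both cases~(a) and~(b), the set $S_t=\{1,\dots,2t+1\}\setminus\{t\}$ contains equally many odd and even integers, so $\Circ(n,S_t)$ fails to be a nut graph if and only if $P(\omega^j)=0$ for some $j\in\{1,\dots,n/2-1\}$, where $\omega=e^{2\pi i/n}$ and $P(y)=\sum_{k\in S_t}(y^k+y^{-k})$. Multiplying~$P$ by $y^{2t+1}$ and using $S_t=\{1,\dots,2t+1\}\setminus\{t\}$ converts this Laurent expression into the ordinary polynomial
$$
P^*(y)=y^{2t+1}P(y)=\sum_{j=0}^{4t+2}y^j-y^{t+1}-y^{2t+1}-y^{3t+1},
$$
which shares all nonzero roots with~$P$. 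Hence it suffices to produce an integer $N\geq 3$ with $N\mid n$ such that $\Phi_N(y)\mid P^*(y)$ and $j=n/N$ lies in $\{1,\dots,n/2-1\}$; then $\omega^j$ is a primitive $N$-th root of unity that is a root of~$P^*$ and therefore of~$P$.

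For case~(a), I will show that $\Phi_{10}(y)\mid P^*(y)$. In the quotient $\Z[y]/\Phi_{10}(y)$, where $\Phi_{10}(y)=y^4-y^3+y^2-y+1$, one has $y^{10}\equiv1$ and $y^5\equiv-1$. For $t\equiv_{10}1$, the exponents $t+1,\,2t+1,\,3t+1,\,4t+3$ reduce modulo~$10$ to $2,\,3,\,4,\,7$, respectively. Because $\Phi_{10}(y)$ divides $\sum_{r=0}^{9}y^r$, every block of ten consecutive powers vanishes modulo $\Phi_{10}$, collapsing $\sum_{j=0}^{4t+2}y^j$ to the residual $1+y+y^2+\dots+y^6$. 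Subtracting the three explicit terms $y^{t+1}+y^{2t+1}+y^{3t+1}\equiv y^2+y^3+y^4$ leaves
$$
P^*(y)\equiv 1+y+y^5+y^6=(1+y)(1+y^5)\equiv 0\pmod{\Phi_{10}(y)},
$$
where the final congruence uses $y^5\equiv-1$. Since $n$~is even and $5\mid n$, we have $10\mid n$, so $j=n/10$ is a positive integer with $j<n/2$.

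For case~(b), I will show that $\Phi_9(y)\mid P^*(y)$. Working modulo $\Phi_9(y)=y^6+y^3+1$ one has $y^9\equiv1$ and $y^7\equiv-y^4-y$. For $t\equiv_{18}15$, one has $t\equiv_9 6$, whence $t+1,\,2t+1,\,3t+1$ reduce modulo~$9$ to $7,\,4,\,1$. Crucially, $4t+3$ is then a multiple of~$9$, so $\sum_{j=0}^{4t+2}y^j$ partitions exactly into $(4t+3)/9$ blocks of nine consecutive powers, each vanishing modulo $\Phi_9$ (since $\Phi_9(y)\mid\sum_{r=0}^{8}y^r$). Therefore
$$
P^*(y)\equiv -y^7-y^4-y\equiv -(-y^4-y)-y^4-y=0\pmod{\Phi_9(y)}.
$$
The bound $n\geq 4t+4\geq 64$ together with $9\mid n$ guarantees that $j=n/9\in\{1,\dots,n/2-1\}$.

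The only genuine subtlety is pinpointing the correct cyclotomic polynomial in each case. In case~(b) it is tempting to try $\Phi_{18}$ instead of $\Phi_9$, but this fails: one finds $4t+3\equiv_{18}9$, so the geometric sum contributes a nontrivial residual that does not cancel modulo $\Phi_{18}$. The choice $\Phi_9$ succeeds precisely because $9\mid 4t+3$ when $t\equiv_9 6$, which in particular holds when $t\equiv_{18}15$. Once these targets are identified, the remaining calculations are routine bookkeeping of exponent residues.
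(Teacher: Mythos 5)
Your proposal is correct and follows essentially the same route as the paper: in both cases one exhibits the primitive $10$th (resp.\ $9$th) root of unity $\omega^{n/10}$ (resp.\ $\omega^{n/9}$) as a zero of $P_{S_t}$, which rules out the nut property via Lemma~\ref{le-even-order} (equivalently Remark~\ref{re-pet}). The only difference is bookkeeping: you verify $\Phi_{10}(y)\mid P^*_{S_t}(y)$ and $\Phi_9(y)\mid P^*_{S_t}(y)$ by reducing the geometric-sum form of $P^*_{S_t}$ modulo the cyclotomic polynomial, whereas the paper substitutes the root directly into the eight-term polynomial $(y-1)P^*_{S_t}(y)$; the computations are equivalent and both are carried out correctly.
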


\begin{proof}
For a given~$t$ we have
$$
P_{S_t}(y) = \left(y+\frac1y\right)+\cdots+\left(y^{t-1}+\frac1{y^{t-1}}\right)
                   +\left(y^{t+1}+\frac1{y^{t+1}}\right)+\cdots+\left(y^{2t+1}+\frac1{y^{2t+1}}\right),
$$
so that 
\begin{align*}
P^*_{S_t}(y) 
= y^{\max(S_t)}P_{S_t}(y)
= & \left(y^{4t+2} + \cdots + y^{3t+2}\right) + \left(y^{3t} + \cdots + y^{2t+2}\right) \\
+ & \left(y^{2t} + \cdots + y^{t+2}\right) + \left(y^t + \cdots + 1\right) \\
= & \ \frac{y^{4t+3} - y^{3t+2} + y^{3t+1} - y^{2t+2} + y^{2t+1} - y^{t+2} + y^{t+1} - 1}{y-1}.
\end{align*}
    
Suppose first that $t\equiv_{10} 1$ and $5 \mid n$ for some even $n \ge 4t+4$.
Then $\frac n{10}$ is an integer, and for $y=\omega^{n/10}$ we have $y^5 = -1$ and $y^{10} = 1$. 
Let $t = 10t'+1$. 
Then
\begin{align*}
(y-1)P^*_{S_t}(y) 
  &= y^{40t'+7} - y^{30t'+5} + y^{30t'+4} - y^{20t'+4} + y^{20t'+3} - y^{10t'+3} + y^{10t'+2} - 1\\
  &= y^7 - (-1) + y^4 - y^4 + y^3 - y^3 + y^2 - 1\\
  &= -y^2 + 1 + y^2 - 1\\
  &= 0,
\end{align*}
so that $P_{S_t}(\omega^{n/10})=0$. 
Then $\Circ(n,S_t)$ is not a nut graph by Lemma~\ref{le-even-order}.

Next, suppose that $t\equiv_{18} 15$ and $9 \mid n$ for some even $n\geq 4t+4$.
Then $\frac n9$ is an integer, and for $y=\omega^{n/9}$ we have $y^9=1$.
Let $t=18t'+15$.
Then 
\begin{align*}
(y-1)P^*_{S_t}(y) 
  &= y^{72t'+63} - y^{54t'+47} + y^{54t'+46} - y^{36t'+32} + y^{36t'+31} - y^{18t'+17} + y^{18t'+16} - 1\\
  &= 1 - y^2 + y - y^5 + y^4 - y^8 + y^7 - 1\\
  &= (y-y^2)(1+y^3+y^6)
\end{align*}
From $y^9-1=0$ we have $(y^3-1)(1+y^3+y^6) = 0$. 
Since $y^3 = \omega^{3n/9} = -\frac 12 + i\frac{\sqrt{3}}2 \neq 1$, it follows that $1+y^3+y^6 = 0$. 
Hence also $P_{S_t}(\omega^{n/9})=0$,
so that $\Circ(n,S_t)$ is not a nut graph by Lemma~\ref{le-even-order}.
\end{proof}

Interestingly,
$t\equiv_{10} 1$ and $t\equiv_{18} 15$ are the only two cases
when the generator set $S_t$ does not yield a circulant nut graph for all even $n\geq 4t+4$.
In the rest of this section 
we prove our main theorem.
\begin{theorem}
\label{th-main}
For each odd $t\geq 3$ such that $t\not\equiv_{10} 1$ and $t\not\equiv_{18} 15$,
the circulant graph $\Circ(n,S_t)$ is a nut graph for each even $n\geq 4t+4$.
\end{theorem}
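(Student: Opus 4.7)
The plan is to use Remark~\ref{re-pet}: it suffices to show that no primitive $b$-th root of unity with $b\geq 3$ is a root of $P^*_{S_t}(y)$, since every such root arises as $\omega^j$ with $j\in\{1,\dots,n/2-1\}$ for some even $n\geq 4t+4$ divisible by~$b$. The first step is to derive the compact identity
$$P^*_{S_t}(y) = \frac{y^{4t+3}-1}{y-1} - y^{t+1}(1+y^t+y^{2t}),$$
obtained by recognising $P^*_{S_t}(y)$ as the full geometric series $1+y+\cdots+y^{4t+2}$ with the three ``gap'' terms $y^{t+1},y^{2t+1},y^{3t+1}$ removed. Clearing denominators yields the identity
$$(y-1)(y^t-1)P^*_{S_t}(y) = N_t(y) := (y^t-1)(y^{4t+3}-1) - y^{t+1}(y-1)(y^{3t}-1),$$
and $N_t(y)$ turns out to be palindromic of degree $5t+3$.

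Now I would fix a primitive $b$-th root of unity $\zeta$ with $b\geq 3$ and suppose $P^*_{S_t}(\zeta)=0$. If $b\mid t$, a short reduction of $(y-1)P^*_{S_t}(y)$ modulo $y^t-1$ gives $(y-1)^3$, so $P^*_{S_t}(\zeta)=(\zeta-1)^2\neq 0$; contradiction. Otherwise $\zeta^t\neq 1$ and the equation becomes $N_t(\zeta)=0$. For ``nice'' orders $b$, namely those dividing one of the exponents $t+1$, $2t+1$, $3t+1$, $4t+2$, $4t+3$ or $3t$, substituting the corresponding simplification into $N_t(\zeta)=0$ collapses the equation to a short expression, and a $\gcd$ computation in the style of Lemma~\ref{le-2t+1-t} forces $b$ into a small explicit set. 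In particular, $b\mid 4t+3$ combined with $b\mid 3t$ yields $b\mid\gcd(4t+3,3t)=\gcd(9,t+3)$, which after imposing $b\nmid t$ and the oddness of~$t$ produces the bad pair $b=9$, $t\equiv 15\pmod{18}$.

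The principal obstacle is the remaining ``generic'' scenario in which $b$ has no simple divisibility relation to the exponents listed above; the primitive $10$th-root case of Lemma~\ref{le-2t+1-t}(a) already shows that such $b$'s can produce bad pairs and so cannot simply be dismissed. Here I would invoke a classification of vanishing sums of roots of unity with $\pm 1$ coefficients (e.g.\ the theorems of Mann or Conway--Jones) applied to the eight-term expression $N_t(\zeta)=\sum_i \pm\zeta^{e_i}$ with $e_i\in\{0,t,t+1,t+2,4t+1,4t+2,4t+3,5t+3\}$, so as to bound the possible orders~$b$ by an absolute constant independent of~$t$, and then dispose of each such $b$ by an explicit congruence analysis modulo~$b$ (aided by the palindromic symmetry $y^{5t+3}N_t(1/y)=N_t(y)$). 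The only additional surviving bad pair is $b=10$ with $t\equiv 1\pmod{10}$, precisely the other exclusion in the theorem's hypotheses. Combining everything, the hypotheses $t\not\equiv 1\pmod{10}$ and $t\not\equiv 15\pmod{18}$ rule out both bad configurations, so $P^*_{S_t}(\zeta)\neq 0$ for every nontrivial root of unity~$\zeta$, and the theorem follows from Remark~\ref{re-pet} combined with Lemma~\ref{le-even-order}.
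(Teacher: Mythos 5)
Your reduction via Remark~\ref{re-pet} to showing that $P^*_{S_t}$ has no primitive $b$-th root of unity as a root for any $b\geq 3$, the identity $P^*_{S_t}(y)=\frac{y^{4t+3}-1}{y-1}-y^{t+1}\left(1+y^t+y^{2t}\right)$, and the observation that $(y-1)P^*_{S_t}(y)\equiv (y-1)^3 \pmod{y^t-1}$ (which disposes of the case $b\mid t$) are all correct, and your route through vanishing sums of roots of unity is genuinely different from the paper's: the paper works with the eight-term lacunary polynomial $Q_{S_t}(y)=(y-1)P^*_{S_t}(y)$ and uses the Filaseta--Schinzel theorem (Theorem~\ref{th-FS}) to strip away prime factors $\geq 11$ of $b$, a separate argument for non-square-free $b$ (Lemma~\ref{le-square-free}), and explicit remainder computations for the finitely many surviving values of $b$ (Lemmas \ref{le-p-2p} and~\ref{le-final-b}).

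However, there is a genuine gap at exactly the point you call the principal obstacle. Mann's theorem and the Conway--Jones classification do \emph{not} bound the order $b$ of $\zeta$ by an absolute constant independent of $t$: they constrain only the orders of the ratios $\zeta^{e_i-e_j}$ of terms lying in a common \emph{minimal} vanishing subsum (those orders involve only primes at most the length of the subsum). Since the order of $\zeta^{e_i-e_j}$ equals $b/\gcd(b,e_i-e_j)$ and the exponents $e_i$ grow linearly with $t$, what these theorems yield is not a finite list of possible $b$'s but a family of coupled conditions of the form ``$b\mid e_i-e_j$'' or ``$e_i-e_j\equiv b/2 \pmod b$'' together with small-prime constraints, ranging over all partitions of the eight terms of $N_t(\zeta)$ into minimal vanishing subsums (sizes $2+2+2+2$, $2+2+4$, $2+3+3$, $3+5$, $4+4$, $2+6$, $8$, and so on). Your ``nice orders'' paragraph covers only the pairing-type conclusions, and only in sketch; the remaining case analysis --- which is where the two exceptional configurations $b=10$ with $t\equiv_{10}1$ and $b=9$ with $t\equiv_{18}15$ must be isolated and every other configuration excluded --- is precisely the hard core of the theorem and is not carried out. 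As written, the argument is a plausible programme rather than a proof; to complete it along your lines you would need to enumerate the admissible subsum partitions and resolve each by congruence arguments in $t$ and $b$, or else fall back on a quantitative divisibility tool such as Theorem~\ref{th-FS}, as the paper does.
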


The proof of this theorem depends on a number of technical lemmas and
the following theorem on divisibility of lacunary polynomials by cyclotomic polynomials.
\begin{theorem}[Filaseta, Schinzel\cite{FiSch}]
\label{th-FS}
Let $P(y)\in\Z[y]$ have $N$ nonzero terms and let $\Phi_b(y)\mid P(y)$.
Suppose that $p_1,\dots,p_k$ are distinct primes such that
\begin{equation}
\label{eq-FS-condition}
\sum_{j=1}^k (p_j-2)>N-2.
\end{equation}
Let $e_j$ be the largest exponent such that $p_j^{e_j}\mid b$.
Then for at least one~$j$, $1\leq j\leq k$, 
we have $\Phi_{b'}(y)\mid P(y)$, where $b'=b/p_j^{e_j}$.
\end{theorem}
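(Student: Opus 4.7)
The plan is, via Remark~\ref{re-pet}, to reduce the theorem to the assertion that no cyclotomic polynomial $\Phi_b(y)$ with $b \geq 3$ divides the polynomial
$$
P^*_{S_t}(y) \;=\; (1+y+y^2+\cdots+y^{4t+2}) - y^{t+1} - y^{2t+1} - y^{3t+1},
$$
which is an immediate unpacking of the form derived in the proof of Lemma~\ref{le-2t+1-t}. This equivalence comes from the fact that whenever $b\geq 3$ divides~$n$, the index $j = n/b$ lies in $\{1, \dots, n/2 - 1\}$ and $\omega^j$ is a primitive $b$-th root of unity; so $\Phi_b \mid P^*_{S_t}$ for any such~$b$ would produce an obstruction $P(\omega^j) = 0$ for every sufficiently large even $n\geq 4t+4$ with $b\mid n$, while the absence of any such divisibility guarantees the nut property for every admissible~$n$.

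The key technical tool will be the Filaseta--Schinzel theorem (Theorem~\ref{th-FS}). A direct count shows $P^*_{S_t}$ has exactly $N = 4t$ nonzero terms, so the FS condition reads $\sum_j (p_j - 2) > 4t - 2$. Suppose by contradiction that $\Phi_b \mid P^*_{S_t}$ for some $b \geq 3$. I apply FS iteratively: at each step I choose a subset of prime divisors of the current~$b$ satisfying the FS condition and pass to $\Phi_{b'}$ with $b' = b/p_j^{e_j}$. Iteration terminates with an \emph{FS-irreducible} residual modulus~$b^*$, characterised by $\sum_{p \mid b^*}(p - 2) \leq 4t - 2$; in particular every prime dividing $b^*$ is at most $4t$, and since $\phi(b^*) \leq \deg P^*_{S_t} = 4t+2$, the integer $b^*$ lies in a finite (though $t$-dependent) list.

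For each surviving candidate $b^*$, I verify $\Phi_{b^*} \nmid P^*_{S_t}$ by evaluating
$$
P^*_{S_t}(\zeta) \;=\; \sum_{i=0}^{4t+2}\zeta^{i} \;-\; \zeta^{t+1}(1 + \zeta^t + \zeta^{2t})
$$
at a primitive $b^*$-th root of unity~$\zeta$. Since $\zeta^{b^*} = 1$, the geometric sum collapses cleanly when $b^* \mid 4t+3$, while the exponents $t, 2t, 3t$ are replaced by their residues modulo~$b^*$; the vanishing of $P^*_{S_t}(\zeta)$ thereby becomes a condition on $t$ modulo~$b^*$ (or modulo an auxiliary modulus determined by the order of $\zeta^t$). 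The technical lemmas alluded to in the theorem statement will perform this analysis for each relevant family of~$b^*$, and should show that the resulting congruence conditions are consistent with odd~$t$ only when $b^* \in \{9,10\}$, forcing $t \equiv_{18} 15$ or $t \equiv_{10} 1$ respectively, both excluded by hypothesis.

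The principal obstacle will be the classification of FS-irreducible~$b^*$: even after filtering out large prime factors, candidate moduli include prime powers $2^a, 3^a, 5^a, \ldots$ of degree up to $4t+2$, and the residue-class analysis for each must be carried out uniformly in $t$. My strategy will be to combine repeated FS reductions with arithmetic identities for the specific form of $P^*_{S_t}$, in particular exploiting the factorisation $(y-1)\,P^*_{S_t}(y) = (y^{4t+3} - 1) - (y-1)\,y^{t+1}(1 + y^t + y^{2t})$, which ties the divisibility $\Phi_{b^*}\mid P^*_{S_t}$ directly to the residues of $4t+3$ and $3t$ modulo $b^*$ and makes the two genuinely obstructive cases $b^*=9$ and $b^*=10$ transparent.
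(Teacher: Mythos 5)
There is a fundamental mismatch here: the statement you were asked to prove is Theorem~\ref{th-FS} itself, the Filaseta--Schinzel divisibility criterion for an \emph{arbitrary} polynomial $P(y)\in\Z[y]$ with $N$ nonzero terms --- a general result about lacunary polynomials and cyclotomic factors, which the paper does not prove but imports verbatim from the cited work of Filaseta and Schinzel. Your proposal never addresses this statement. Instead, you sketch a proof of the paper's main result (Theorem~\ref{th-main}, the nut property of $\Circ(n,S_t)$), and you do so \emph{by invoking Theorem~\ref{th-FS} as your ``key technical tool''}. Using the statement to be proved as an ingredient in a proof of a different theorem is not a proof of the statement; as it stands, nothing in your text bears on why the condition $\sum_j(p_j-2)>N-2$ forces $\Phi_{b/p_j^{e_j}}(y)\mid P(y)$ for some $j$. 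A genuine proof would have to work at the level of general lacunary polynomials (e.g., analysing $P$ modulo the relation $y^{b}\equiv 1$ and the structure of $\Phi_b$ as a polynomial in $y^{p_j^{e_j}}$), not at the level of the specific circulant polynomial $P^*_{S_t}$.

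Even read charitably as an outline of Theorem~\ref{th-main}, the sketch diverges from the workable route in a way that matters. You apply Filaseta--Schinzel directly to $P^*_{S_t}$, counting $N=4t$ nonzero terms, so your condition $\sum_j(p_j-2)>4t-2$ only lets you strip primes larger than roughly $4t$; the residual moduli then form a list growing with $t$, and your plan concedes that the case analysis must be ``carried out uniformly in $t$'' without saying how. The paper instead multiplies by $y-1$ to get $Q_{S_t}(y)=(y-1)P^*_{S_t}(y)$, which has only $N=8$ nonzero terms, so that any single prime $p\geq 11$ already satisfies the condition and repeated application reduces every square-free $b$ to a bounded, $t$-independent list $\{3,5,6,7,10,14,15,21,30,42\}$ (plus the cases $p$, $2p$ and the non-square-free case handled separately). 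You mention the factorisation $(y-1)P^*_{S_t}(y)$ only in passing at the end, but without making it the polynomial to which Filaseta--Schinzel is applied, the reduction does not close. In short: the proposal proves the wrong statement, and its use of the right statement is circular.
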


Note that Filaseta-Schinzel theorem can be applied repeatedly to remove, at each step, 
a prime factor from~$b$ for which $\Phi_b(y)\mid P(y)$,
as long as the prime factors of~$b$ satisfy the condition~(\ref{eq-FS-condition}).
The polynomial to which we will apply Filaseta-Schinzel theorem is
\begin{equation}
\label{eq-q-polynomial}
Q_{S_t}(y)=(y-1)P^*_{S_t}(y) = y^{4t+3} - y^{3t+2} + y^{3t+1} - y^{2t+2} + y^{2t+1} - y^{t+2} + y^{t+1} - 1
\end{equation}
for which $N=8$.
Note that the condition $\Phi_b(y)\mid P^*_{S_t}(y)$ for $b\geq 3$ is equivalent to $\Phi_b(y)\mid Q_{S_t}(y)$,
since $\Phi_b(y)$ is irreducible by definition and thus $\gcd(\Phi_b(y), y-1)=\gcd(\Phi_b(y), \Phi_1(y))=1$ whenever $b\neq 1$.

Before we delve into details,
let us briefly outline the proof of Theorem~\ref{th-main}:
we assume that $\Phi_b(y)\mid Q_{S_t}(y)$ for some $b\geq 3$.
In Lemma~\ref{le-square-free} we obtain a contradiction in the case when $b$ is not square-free, 
i.e., there exists a prime~$p$ such that $p^2\mid b$.
If a square-free~$b$ is not divisible by either of 3, 5 and 7,
the repeated application of Filaseta-Schinzel theorem then implies that
for some prime $p>7$ either $\Phi_p(y)\mid Q_{S_t}(y)$ or $\Phi_{2p}(y)\mid Q_{S_t}(y)$.
Contradiction in these cases is obtained in Lemma~\ref{le-p-2p}.
On the other hand, if a square-free~$b$ is divisible by either of 3, 5 and 7,
then Filaseta-Schinzel theorem can be applied repeatedly to remove the prime factors of~$b$ that are larger than~7,
concluding that $\Phi_{b'}(y)\mid Q_{S_t}(y)$ 
for some $b'$ that is a product of a subset of $\{2,3,5,7\}$
(where 5 and 7 do not both belong to this subset,
 as otherwise Filaseta-Schinzel theorem can be applied again to remove one of these two factors).
This implies that $b'\in\{3,5,6,7,10,14,15,21,30,42\}$ and 
contradiction in these cases is obtained in Lemma~\ref{le-final-b}.
Taken together, these contradictions imply that 
$P^*_{S_t}(y)$ is not divisible by $\Phi_b(y)$ for any $b\geq 3$,
i.e., that $\Circ(n,S_t)$ is a nut graph for each even $n\geq 4t+4$.

Let us now start with an auxiliary claim on the set of exponents appearing in~$Q_{S_t}(y)$,
that will be needed in the proofs of both Lemmas \ref{le-square-free} and~\ref{le-p-2p}.

\begin{claim}
\label{le-unique-rem}
For each prime $p\geq 5$ and each odd $t\geq 3$ such that $t\not\equiv_{10} 1$, 
there always exists an element of the set
$$
I_{S_t} = \{4t+3, 3t+2, 3t+1, 2t+2, 2t+1, t+2, t+1, 0\}
$$
whose remainder modulo~$p$ is unique within this set.
\end{claim}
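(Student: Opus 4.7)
The plan is to locate a unique residue by first considering the element $a_4 := 2t+2$ of $I_{S_t}$. The seven absolute differences $|a_4-a_j|$ for $j\neq 4$ form the set $\{1,t-1,t,t+1,2t+1,2t+2\}$, and since $p\geq 5$ is an odd prime we have $p\nmid 1$ and $p\mid 2t+2 \Leftrightarrow p\mid t+1$. Hence $a_4$ has a unique residue modulo $p$ unless $t$ is congruent to one of $0$, $1$, $-1$, or $\tfrac{p-1}{2}$ modulo $p$. These four residue classes are pairwise distinct for $p\geq 5$, since any coincidence would force $p$ to divide one of $1$, $2$, or $\tfrac{p\pm 3}{2}$, all impossible. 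The claim therefore follows outside these four exceptional cases, so the remainder of the argument is a case-by-case verification.

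I would then dispatch the first three exceptional classes by direct inspection of the residues of $I_{S_t}$. For $t\equiv 0\pmod p$ the residues become $\{3,2,1,2,1,2,1,0\}$, and $a_1$ (with residue $3$) is unique. For $t\equiv -1\pmod p$ they become $\{-1,-1,-2,0,-1,1,0,0\}$, and $a_3$ (with residue $-2$) is unique. For $t\equiv\tfrac{p-1}{2}\pmod p$ the relation $2t\equiv -1$ collapses the residues to $\{1,t+1,t,1,0,t+2,t+1,0\}$, and the residue $t$ of $a_3$ does not coincide with any of $0$, $1$, $t+1$, or $t+2$ modulo $p$, since each such coincidence would force $p$ to divide one of $t$, $t-1$, $1$, or $2$, all ruled out under the standing hypotheses.

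The last exceptional class, $t\equiv 1\pmod p$, yields the integer residues $\{7,5,4,4,3,3,2,0\}$. For $p\geq 7$ the values $5$ and $2$ remain distinct from all other entries modulo $p$, so $a_2$ and $a_7$ both have unique residues. The only genuinely problematic subcase is $p=5$, where the multiset collapses to $\{2,0,4,4,3,3,2,0\}\pmod 5$ and every residue is repeated; but this is precisely the configuration excluded by the hypotheses, since $t$ odd together with $t\equiv 1\pmod 5$ forces $t\equiv_{10} 1$.

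The main conceptual step will be the choice of $a_4$, which has the smallest difference set among the eight elements of $I_{S_t}$ and so keeps the number of exceptional residue classes for $t$ down to four. After that, the remaining obstacle is the subtle interplay between the parity of $t$ and its residue modulo $5$: matching the single degenerate subcase $p=5$, $t\equiv 1\pmod 5$ against the excluded congruence $t\equiv_{10} 1$ is the only place where the arithmetic hypothesis on $t$ is genuinely needed.
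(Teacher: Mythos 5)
Your proposal is correct, and it reaches the same final accounting of the hypotheses as the paper (the only place where $t$ odd and $t\not\equiv_{10}1$ enter is to exclude $p=5$ together with $t\equiv_p 1$), but the engine of the argument is different. The paper also splits on $t\bmod p$, treating $t\equiv_p 1,0,-1,-2$ by inspection (with the same witnesses $3t+2$, $t+1$; $4t+3$; $3t+1$, $t+2$ that you find), and then handles the remaining residues by a pigeonhole/pairing argument: if no remainder were unique, the eight exponents would split into four pairs, forcing $2t+1\equiv_p 0$ and then $2t-1\equiv_p 0$, hence $p\mid 2$, a contradiction. You instead anchor on the single element $2t+2$, whose difference set $\{1,t-1,t,t+1,2t+1,2t+2\}$ shows it is already a unique residue unless $t\equiv_p 0,\pm 1$ or $2t\equiv_p-1$, and in the last case you exhibit the explicit witness $3t+1$. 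Your route is fully constructive (a named unique element in every case), dispenses with the paper's separate case $t\equiv_p -2$, and isolates $2t+1\equiv_p 0$ as a case with a witness rather than a contradiction; the paper's version avoids having to verify a witness there at the cost of the pairing argument and the extra $t\equiv_p-2$ case. Two cosmetic points: the pairwise-distinctness of the four exceptional classes is not actually needed (overlapping cases would be harmless), and the difference between $\tfrac{p-1}{2}$ and $-1$ is $\tfrac{p+1}{2}$, not $\tfrac{p+3}{2}$; also, in the case $2t\equiv_p-1$ the exclusions $p\nmid t$ and $p\nmid t-1$ follow from that case assumption together with $p\geq 5$, which is what your ``standing hypotheses'' should be understood to mean. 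None of this affects correctness.
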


\begin{proof}
Let us first cover a few special cases:

\medskip\noindent
{\em Case} $t \equiv_p 1$.\quad Then
\begin{align*}
    4t+3 &\equiv_p 7, &
    3t+2 &\equiv_p 5, & 
    3t+1 &\equiv_p 4, &
    2t+2 &\equiv_p 4, \\
    2t+1 &\equiv_p 3, & 
      t+2 &\equiv_p 3, &
      t+1 &\equiv_p 2, &
         0 &\equiv_p 0.
\end{align*}
From $p\mid t-1$ and $10\nmid t-1$ we see that $p\neq 5$, i.e., $p\geq 7$.
Then $3t+2$ and $t+1$ have unique remainders modulo~$p$.

\medskip\noindent
{\em Case} $t \equiv_p 0$.\quad Then
\begin{align*}
    4t+3 &\equiv_p 3, &
    3t+2 &\equiv_p 2, &
    3t+1 &\equiv_p 1, &
    2t+2 &\equiv_p 2, \\
    2t+1 &\equiv_p 1, &
      t+2 &\equiv_p 2, &
      t+1 &\equiv_p 1, &
         0 &\equiv_p 0. 
\end{align*}
In this case, $4t+3$ and 0 have unique remainders modulo~$p$.

\medskip\noindent
{\em Case} $t \equiv_p -1$.\quad Then
\begin{align*}
    4t+3 &\equiv_p -1, &
    3t+2 &\equiv_p -1, &
    3t+1 &\equiv_p -2, &
    2t+2 &\equiv_p  0, \\
    2t+1 &\equiv_p -1, &
      t+2 &\equiv_p  1, &
      t+1 &\equiv_p  0, &
         0 &\equiv_p  0.
\end{align*}
Here $3t+1$ and $t+2$ have unique remainders modulo~$p$.

\medskip\noindent
{\em Case} $t \equiv_p -2$.\quad Then
\begin{align*}
    4t+3 &\equiv_p -5, &
    3t+2 &\equiv_p -4, &
    3t+1 &\equiv_p -5, &
    2t+2 &\equiv_p -2, \\
    2t+1 &\equiv_p -3, &
      t+2 &\equiv_p  0, &
      t+1 &\equiv_p -1, &
         0 &\equiv_p  0.
\end{align*}
In this case each of $3t+2$, $2t+2$, $2t+1$ and $t+1$ has a unique remainder modulo~$p$.

\medskip\noindent
{\em Case} $t\not\equiv_p 1, 0, -1, -2$.\quad
If there would not be a unique remainder modulo~$p$ in this case,
then each remainder will appear at least twice.
and since $|I_{S_t}|=8$, there would be at most four distinct remainders.
As in this case 
$$
0,\ t+1,\ t+2,\ 2t+2
$$ 
all have different remainders modulo~$p$,
we conclude that $I_{S_t}$ then consists of four pairs of elements with common remainder modulo~$p$.

Since
$$
2t+1\not\equiv_p t+1,\ t+2,\ 2t+2
$$
we must have $2t+1\equiv_p 0$. 
This implies 
\begin{align*}
    3t+2 &\equiv_p t+1, &
    4t+3 &\equiv_p 2t+2,
\end{align*}
which leaves as the last pair
$$
3t+1\equiv_p t+2.
$$
Hence $2t-1\equiv_p 0$, which together with $2t+1\equiv_p 0$ yields $2\equiv_p 0$, which is a contradiction since $p\geq 5$.
Thus some element of~$I_{S_t}$ must have a unique remainder modulo~$p$ in this case as well.
\end{proof}

\begin{lemma}
\label{le-square-free}
Let $t\geq 3$ be an odd integer such that $t\not\equiv_{10} 1$ and $t\not\equiv_{18} 15$.
If there exists a prime $p$ such that $p^2\mid b$, then $Q_{S_t}(y)$ is not divisible by $\Phi_b(y)$.
\end{lemma}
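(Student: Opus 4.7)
The plan is to argue by contradiction: suppose $\Phi_b(y) \mid Q_{S_t}(y)$ with $p^2 \mid b$, fix a primitive $b$-th root of unity $\zeta$, and set $\xi = \zeta^{b/p}$, which is a primitive $p$-th root of unity. Because $p^2 \mid b$, the integer $1 + kb/p$ is coprime to $b$ for every $k \in \{0, 1, \ldots, p-1\}$, so each map $\sigma_k \colon \zeta \mapsto \zeta^{1 + kb/p}$ is a Galois automorphism of $\Q(\zeta)$. Writing $Q_{S_t}(y) = \sum_j \epsilon_j y^{a_j}$ with $\epsilon_j \in \{\pm 1\}$ and $a_j \in I_{S_t}$, applying $\sigma_k$ to $Q_{S_t}(\zeta) = 0$ gives
$$
\sum_j \epsilon_j \zeta^{a_j} \xi^{k a_j} = 0, \qquad k = 0, 1, \ldots, p-1.
$$
Multiplying the $k$-th identity by $\xi^{-kr}$ and summing over $k$, using the orthogonality relation $\sum_{k=0}^{p-1} \xi^{k(a_j - r)} = p$ when $a_j \equiv r \pmod{p}$ and $0$ otherwise, extracts for each residue $r$ the subsum identity
$$
\sum_{\substack{j \\ a_j \equiv r \pmod{p}}} \epsilon_j \zeta^{a_j} = 0.
$$

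When $p \geq 5$, Claim~\ref{le-unique-rem} supplies a residue $r$ for which exactly one $a_j \in I_{S_t}$ satisfies $a_j \equiv r \pmod{p}$. The corresponding subsum identity then collapses to $\epsilon_j \zeta^{a_j} = 0$, which is absurd. Hence only $p \in \{2, 3\}$ remain to be treated, and these I would dispatch by explicit case analysis.

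For $p = 2$, we have $4 \mid b$, and since $t$ is odd the set $I_{S_t}$ splits into four even and four odd exponents. The two resulting subsum identities, after factoring out $\zeta^{t+2}$ from the odd one, can be subtracted to produce the factored relation $(1 - \zeta^2)\zeta^{t-1}(1 - \zeta^{t+1}) = 0$. Since $b \geq 4$ rules out $\zeta^2 = 1$, we must have $\zeta^{t+1} = 1$; substituting this back into the even-residue identity forces $\zeta^{-2} = 1$, contradicting $b \geq 4$.

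For $p = 3$, so $9 \mid b$, I would split on $t \bmod 3$. If $t \equiv 1 \pmod{3}$, the two shortest subsum identities give $\zeta^{t+2} = -1$ and $\zeta^{2t+1} = 1$, which together force $b \mid 3$; if $t \equiv 2 \pmod{3}$ one derives $b \mid 9$ together with the condition that $\zeta^{t+1}$ be a primitive sixth root of unity, which is impossible since no power of a primitive ninth root of unity has order $6$; and if $t \equiv 0 \pmod{3}$ one obtains $b \mid 4t+3$ and $b \mid 3t$, forcing $b = 9$, and then $9 \mid 4t+3$ combined with the parity of $t$ forces $t \equiv 15 \pmod{18}$, contradicting the standing hypothesis. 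The main obstacle I anticipate is precisely this last sub-case: it is exactly where the assumption $t \not\equiv_{18} 15$ enters the proof, and tracking the simultaneous divisibility constraints on $b$ and $t$ (so that only the excluded family survives) will require the most care.
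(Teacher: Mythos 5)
Your proposal is correct and follows essentially the same strategy as the paper's proof: the heart of both arguments is the decomposition of $Q_{S_t}$ into residue-class subsums of exponents modulo $p$, each of which must vanish at a primitive $b$-th root of unity, followed by Claim~\ref{le-unique-rem} for $p\geq 5$ and explicit manipulation for $p=2,3$. The only genuine difference is how the subsum identities are justified: the paper uses the polynomial identity $\Phi_b(y)=\Phi_{b/p}(y^p)$ (valid when $p\mid b/p$, i.e.\ $p^2\mid b$) to conclude that each slice of $Q_{S_t}$ is divisible by $\Phi_b$, whereas you use the Galois automorphisms $\zeta\mapsto\zeta^{1+kb/p}$ together with orthogonality of $p$-th roots of unity; these are equivalent mechanisms and your coprimality check for $1+kb/p$ is exactly where $p^2\mid b$ enters. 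Your $p=2$ computation matches the paper's, and your $t\equiv_3 0$ and $t\equiv_3 2$ subcases are valid (the former, via $b\mid\gcd(4t+3,3t)\mid 9$ forcing $b=9$ and then $t\equiv_{18}15$, is arguably cleaner than the paper's route through $\Phi_b(y)\mid y^9-1$). One small slip: in the subcase $t\equiv_3 1$ the two relevant subsums give $\zeta^{2t+1}=1$ and $\zeta^{2t+1}-\zeta^{t+2}-1=0$, whose combination yields the immediate absurdity $\zeta^{t+2}=0$ rather than $\zeta^{t+2}=-1$; the contradiction is therefore even more direct than you describe, so nothing is lost.
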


\begin{proof}
From the relation $\Phi_{np}(y) = \Phi_n(y^p)$ which holds when $p\mid n$ (see, e.g., \cite[p.~160]{Nage}),
we see that $\Phi_b(y)=\Phi_{b/p}(y^p)$ is a polynomial in $y^p$.

Suppose now that $Q_{S_t}(y) = \Phi_b(y) V(y)$ 
for some polynomial $V(y)=\sum_{i=0}^d v_i y^i\in\Z[y]$.
Then we can write
$$
Q_{S_t}(y) = \sum_{j=0}^{p-1} \sum_{\substack{0\leq i\leq d \\ i\equiv_p j}} \Phi_{b/p}(y^p) v_i y^i.
$$
For $j=0,\dots,p-1$, the inner sum 
$$
Q_{S_t}^{(j)}(y) = \sum_{\substack{0\leq i\leq d \\ i\equiv_p j}} \Phi_{b/p}(y^p) v_i y^i 
                            = \Phi_b(y)\sum_{\substack{0\leq i\leq d \\ i\equiv_p j}} v_i y^i
$$
is comprised of all terms of $Q_{S_t}(y)$ whose exponents are congruent to $j\bmod p$,
and it is divisible by~$\Phi_b(y)$.
We obtain a contradiction independently in each of the following cases.

\bigskip\noindent            
\emph{Case }$p = 2$.\quad
In this case $2 \mid 0,\ t+1,\ 2t+2,\ 3t+1$ and $2 \nmid t+2,\ 2t+1,\ 3t+2,\ 4t+3$. 
Thus
    \begin{align*}
       \Phi_b(y) \mid Q_{S_t}^{(0)}(y) &= y^{3t+1} - y^{2t+2} + y^{t+1} - 1,\\
       \Phi_b(y) \mid Q_{S_t}^{(1)}(y) &= y^{4t+3} - y^{3t+2} + y^{2t+1} - y^{t+2} \\
                                                           &= y^{t+2}\left(y^{3t+1} - y^{2t} + y^{t-1} - 1\right).
    \end{align*}
As $\Phi_b(y)$ is irreducible, we have $\Phi_b(y) \mid y^{3t+1} - y^{2t} + y^{t-1} - 1$. 
Subtracting, we obtain
\begin{align*}
        \Phi_b(y) &\mid (y^{3t+1} - y^{2t} + y^{t-1} - 1) - (y^{3t+1} - y^{2t+2} + y^{t+1} - 1) \\
                      &= y^{2t+2} - y^{2t} - y^{t+1} + y^{t-1} \\
                      &= y^{t-1}\left(y^{t+3} - y^{t+1} - y^{2} + 1\right).
\end{align*}
Hence 
$$
\Phi_b(y)\mid y^{t+3} - y^{t+1} - y^{2} + 1 = (y^{t+1} - 1)(y-1)(y+1).
$$
From $4\mid b$ we have $b\geq 4$, and so
$$
\Phi_b(y)\mid y^{t+1}-1.
$$
Subtracting again,
$$
\Phi_b(y)\mid (y^{3t+1} - y^{2t+2} + y^{t+1} - 1) - (y^{t+1} - 1) 
                   = y^{3t+1} - y^{2t+2} 
                   = y^{2t}\left(y^{t+1}-y^2\right).
$$
Hence $\Phi_b(y)\mid y^{t+1}-y^2$, which together with $\Phi_b(y)\mid y^{t+1}-1$, 
implies $\Phi_b(y) \mid y^2 - 1=(y-1)(y+1)$, which is a contradiction for $b\geq 4$.

\bigskip\noindent
\emph{Case }$p = 3$.\quad
In this case $9\mid b$ (in particular, $b\geq 9$) and we have three subcases depending on $t \bmod 3$.
The appropriate modular values are shown in Table~\ref{tb-exponents-mod-3}.
    
    \begin{table}[h!t]
    {\footnotesize
    \begin{center}
    \begin{tabular}{rccc}
    \toprule                & $t \equiv_3 0$ & $t \equiv_3 1$ & $t \equiv_3 2$ \\
    \midrule
    $(4t+3) \bmod 3$ & $0$                 & $1$                 & $2$\\
    $(3t+2) \bmod 3$ & $2$                 & $2$                 & $2$\\
    $(3t+1) \bmod 3$ & $1$                 & $1$                 & $1$\\
    $(2t+2) \bmod 3$ & $2$                 & $1$                 & $0$\\
    $(2t+1) \bmod 3$ & $1$                 & $0$                 & $2$\\
    $(t+2)   \bmod 3$ & $2$                 & $0$                 & $1$\\
    $(t+1)   \bmod 3$ & $1$                 & $2$                 & $0$\\
    $0         \bmod 3$ & $0$                 & $0$                 & $0$\\
    \bottomrule
    \end{tabular}
    \end{center}
    \caption{Exponents of terms of $Q_{S_t}(y)$ modulo 3.}
    \label{tb-exponents-mod-3}
    }
    \end{table}

\bigskip\noindent
\emph{Subcase }$t\equiv_3 0$.\quad
In this subcase 
\begin{align}
\label{eq-phi-y4t+3-1}
\Phi_b(y) &\mid Q_{S_t}^{(0)}(y) = y^{4t+3}-1, \\
\nonumber
\Phi_b(y) &\mid Q_{S_t}^{(1)}(y)=y^{3t+1} + y^{2t+1} + y^{t+1}=y^{t+1}\left(y^{2t} + y^{t} + 1\right),
\end{align}
and thus $\Phi_b(y)\mid y^{2t} + y^{t} + 1$.
Then also 
\begin{equation}
\label{eq-phi-y3t-1}
\Phi_b(y) \mid (y^{2t} + y^{t} + 1)(y^t - 1)=y^{3t} - 1.
\end{equation}
From (\ref{eq-phi-y4t+3-1}) and~(\ref{eq-phi-y3t-1}) we further have
\begin{align}
\label{eq-phi-y12t+9-1}
\Phi_b(y) &\mid (y^{4t+3})^3-1 = y^{12t+9}-1, \\
\label{eq-phi-y12t-1}
\Phi_b(y) &\mid (y^{3t})^4-1 = y^{12t}-1.
\end{align}
From (\ref{eq-phi-y12t-1}) follows $\Phi_b(y)\mid y^9\left(y^{12t}-1\right)$,
which together with (\ref{eq-phi-y12t+9-1}) yields
\begin{equation}
\label{eq-phi-y9-1}
\Phi_b(y)\mid y^9-1.
\end{equation}
Due to $t \not\equiv_{18} 15$, either $t \equiv_9 0$ or $t \equiv_9 3$. 

If $t \equiv_9 0$ then $4t/9$ is an integer, so from~(\ref{eq-phi-y9-1}) we have
$$
\Phi_b(y) \mid (y^9)^{\frac{4t}{9}} - 1 = y^{4t} - 1,
$$
and also $\Phi_b(y)\mid y^3(y^{4t}-1)$.
Subtracting this from~(\ref{eq-phi-y4t+3-1}) we arrive at
$$
\Phi_b(y) \mid y^3-1=(y-1)(y^2+y+1),
$$
which is a contradiction due to $b\geq 9$.

Likewise, if $t \equiv_9 3$ then $4(t-3)/9$ is an integer, so from~(\ref{eq-phi-y9-1}) we have
$$
\Phi_b(y) \mid (y^9)^{\frac{4(t-3)}{9} + 1} - 1 = y^{4t-3} - 1,
$$
and also $\Phi_b(y)\mid y^6(y^{4t-3}-1)$. 
Subtracting this from~(\ref{eq-phi-y4t+3-1}) we arrive at
$$
\Phi_b(y) \mid y^6-1.
$$
The iterative method based on the lower bound~(\ref{eq-phi-lower-bound}) yields that
$$
\phi(b)\leq 6 \quad\Rightarrow\quad b\leq 25.
$$
Together with $9\mid b$ this implies that either $b=9$ and $b=18$.
However $\Phi_9(y)=y^6+y^3+1$ and $\Phi_{18}(y)=y^6-y^3+1$,
which both contradict $\Phi_b(y)\mid y^6-1$.

\bigskip\noindent
\emph{Subcase }$t \equiv_3 1$.\quad
In this subcase 
\begin{align}
\label{eq-phi-y2t+1-yt+2-1}
\Phi_b(y) &\mid Q_{S_t}^{(0)}(y) = y^{2t+1} - y^{t+2} - 1, \\
\nonumber
\Phi_b(y) &\mid Q_{S_t}^{(2)}(y) = -y^{3t+2}+y^{t+1} = -y^{t+1}\left(y^{2t+1}-1\right),
\end{align}
and thus $\Phi_b(y) \mid y^{2t+1} - 1$. 
Together with~(\ref{eq-phi-y2t+1-yt+2-1}),
this implies $\Phi_b(y)\mid y^{t+2}$ after subtracting, 
which is a contradiction.

\bigskip\noindent
\emph{Subcase }$t\equiv_3 2$.\quad
In this subcase
$$
\Phi_b(y) \mid Q_{S_t}^{(0)}(y) = -y^{2t+2} + y^{t+1} - 1,
$$
from where $\Phi_b(y)\mid (y^{2t+2} - y^{t+1} + 1)(y^{t+1}+1)=y^{3t+3} + 1$, and
$$
\Phi_b(y) \mid Q_{S_t}^{(1)}(y) = y^{3t+1} - y^{t+2} = y^{t+2}(y^{2t-1}-1),
$$
from where $\Phi_b(y)\mid y^{2t-1}-1$.
From these two implications we further have
\begin{align*}
\Phi_b(y) &\mid (y^{3t+3}+1)(y^{3t+3}-1) = y^{6t+6}-1, \\
\Phi_b(y) &\mid (y^{2t-1})^3-1 = y^{6t-3}-1,
\end{align*}
and consequently $\Phi_b(y)\mid (y^{6t+6}-1)-y^9(y^{6t-3}-1) = y^9-1$.
This means that each root~$z$ of~$\Phi_b(y)$ has the form $z=e^{2i\pi k/9}$ for some $k\in\{0,\dots,8\}$.
Due to $t+1=3t'$ for some integer~$t'$,
we have $z^{3t+3}=e^{2i\pi k(3t+3)/9}=e^{2i\pi kt'}=1$.
Since this holds for each root of~$\Phi_b(y)$,
it follows that $\Phi_b(y)\mid y^{3t+3}-1$.
Together with $\Phi_b(y)\mid y^{3t+3}+1$,
this implies $\Phi_b(y)\mid 2$, which is clearly impossible.

\bigskip\noindent
\emph{Case }$p \ge 5$.\quad
In this case Claim~\ref{le-unique-rem} implies that
there exists an element~$f$ of~$I_{S_t}$ with a unique remainder modulo~$p$ within~$I_{S_t}$.
Then $\Phi_b(y)\mid Q_{S_t}^{(f\bmod p)}(y) = \pm y^f$, which is a contradiction, 
as $\Phi_b(y)$ is irreducible with $\deg \Phi_b(y)\geq p(p-1)>1$.
\end{proof}

Before we move on with the next two lemmas, 
note that $\Phi_b(y)\mid y^b-1$ implies that for any integer~$c$
$$
y^c\equiv_{\Phi_b(y)} y^{c\bmod b}.
$$
Hence $\Phi_b(y)\mid Q_{S_t}(y)$ is equivalent to
\begin{align*}
\Phi_b(y) \mid Q_{S_t}^{\!\bmod b}(y) &=
       y^{(4t + 3) \bmod b} - y^{(3t + 2) \bmod b} + y^{(3t + 1) \bmod b} - y^{(2t + 2) \bmod b}\\
    &+ y^{(2t + 1) \bmod b} - y^{(t + 2) \bmod b} + y^{(t + 1) \bmod b} - 1.
\end{align*}    
    
\begin{lemma}
\label{le-p-2p}
For odd $t\geq 3$ and a prime $p\geq 11$,
$Q_{S_t}(y)$ is not divisible by either $\Phi_p(y)$ or $\Phi_{2p}(y)$.
\end{lemma}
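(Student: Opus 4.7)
The plan is to exploit the disparity between the number of nonzero terms in $Q_{S_t}(y)$ (at most $|I_{S_t}|=8$) and the $p$ nonzero coefficients of $\Phi_p(y)$: since $p\geq 11>8$, counting monomials will quickly force a contradiction, with Claim~\ref{le-unique-rem} providing the required nonvanishing witness. The $\Phi_{2p}(y)$ case will be reduced to the $\Phi_p(y)$ case via the standard identity $\Phi_{2p}(y)=\Phi_p(-y)$, valid for every odd prime $p$.

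First I would treat $\Phi_p(y)$ directly. Since $y^p\equiv 1\pmod{\Phi_p(y)}$, reducing exponents modulo $p$ turns $Q_{S_t}(y)$ into
\[
R(y) \;=\; \sum_{f\in I_{S_t}} \sigma_f\, y^{f\bmod p},
\]
where $\sigma_f\in\{+1,-1\}$ matches the sign of $y^f$ in $Q_{S_t}$ and terms with a common residue mod $p$ are combined. As $\deg R(y)<p$, the divisibility $\Phi_p(y)\mid R(y)$ forces $R(y)=c\,\Phi_p(y)$ for some $c\in\Z$. By Claim~\ref{le-unique-rem}---whose only use of the hypothesis $t\not\equiv_{10} 1$ is to exclude $p=5$ in the $t\equiv_p 1$ subcase, which is automatic here since $p\geq 11$---some $f^*\in I_{S_t}$ has a unique residue mod $p$, so the $y^{f^*\bmod p}$-coefficient of $R(y)$ equals $\sigma_{f^*}=\pm 1$, forcing $c=\pm 1$. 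But then $R(y)=\pm\Phi_p(y)$ would have $p\geq 11$ nonzero coefficients, whereas $R(y)$ has at most $|I_{S_t}|=8$ nonzero monomials---contradiction.

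For $\Phi_{2p}(y)$, setting $z=-y$ converts the divisibility into $\Phi_p(z)\mid Q_{S_t}(-z)$ via $\Phi_{2p}(y)=\Phi_p(-y)$. The polynomial $Q_{S_t}(-z)$ is again supported on the exponent set $I_{S_t}$, with each coefficient still in $\{\pm 1\}$ (differing from the original signs only by a factor of $(-1)^f$). Since the exponent set is unchanged, Claim~\ref{le-unique-rem} again supplies an $f^*\in I_{S_t}$ with unique residue mod $p$, and the identical counting argument rules out $\Phi_p(z)\mid Q_{S_t}(-z)$. The only subtlety worth flagging is the sign flips in $Q_{S_t}(-z)$: they are harmless because Claim~\ref{le-unique-rem} depends only on the exponent set and not on the monomial signs, so the $z^{f^*\bmod p}$-coefficient of the reduction is still $\pm 1 \neq 0$.
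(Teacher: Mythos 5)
Your proof is correct and follows essentially the same strategy as the paper: reduce exponents modulo $p$ to get a polynomial of degree below $p$ supported on at most $|I_{S_t}|=8$ monomials, use Claim~\ref{le-unique-rem} (noting correctly that its hypothesis $t\not\equiv_{10}1$ is only needed for $p=5$, hence vacuous here) to guarantee a surviving coefficient $\pm1$, and then contradict the fact that any nonzero multiple $c\,\Phi_p$ or $c\,\Phi_{2p}$ of degree $p-1$ has $p\geq 11>8$ nonzero terms. Your handling of the $\Phi_{2p}$ case via the identity $\Phi_{2p}(y)=\Phi_p(-y)$ is a harmless cosmetic variant of the paper's reduction using $y^p\equiv -1 \pmod{\Phi_{2p}(y)}$, and the sign flips you flag indeed do not affect the argument.
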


\begin{proof}
We know that
    \begin{align*}
        \Phi_{p}(y) &= y^{p-1} + y^{p-2} + \cdots + y^2 + y + 1, \\
        \Phi_{2p}(y) &= y^{p-1} - y^{p-2} + \cdots + y^2 - y + 1.
    \end{align*}
If we assume that $\Phi_p(y)\mid Q_{S_t}(y)$, then also $\Phi_p(y)\mid Q_{S_t}^{\!\bmod p}(y)$.
Since $\deg\Phi_p(y)=p-1\geq\deg Q_{S_t}^{\!\bmod p}(y)$ and 
$\Phi_p(y)$ has more nonzero terms ($p$) than $Q_{S_t}^{\!\bmod p}(y)$ (at most eight),
the case $Q_{S_t}^{{\!\bmod p}}(y) = d\Phi_p(y)$ for some nonzero constant~$d$ is impossible.
Hence it must be $Q_{S_t}^{\!\bmod p}(y)\equiv 0$.
However, Claim~\ref{le-unique-rem} implies the existence of an element $f\in I_{S_t}$ with a unique remainder modulo~$p$.
Thus $Q_{S_t}^{\!\bmod p}(y)$ has the coefficient of $y^{f\bmod p}$ equal to $\pm 1$,
and so it cannot be a zero polynomial, a contradiction.
    
On the other hand, from $(y+1)\Phi_{2p}(y)=y^p+1$, 
we have $y^p\equiv_{\Phi_{2p}(y)} -1$ and thus for any integer~$c$
$$
y^c\equiv_{\Phi_{2p}(y)} (-1)^{\lfloor c/p\rfloor} y^{c\bmod p}.
$$
Hence assuming $\Phi_{2p}(y)\mid Q_{S_t}(y)$ implies
\begin{align*}
\Phi_{2p}(y)\mid Q_{S_t}^{-\!\bmod p}(y) 
                   &= (-1)^{\lfloor\frac{4t+3}{p}\rfloor}y^{(4t + 3) \bmod p} 
                      - (-1)^{\lfloor\frac{3t+2}{p}\rfloor}y^{(3t + 2) \bmod p} \\
                   &+ (-1)^{\lfloor\frac{3t+1}{p}\rfloor}y^{(3t + 1) \bmod p}
                      - (-1)^{\lfloor\frac{2t+2}{p}\rfloor}y^{(2t + 2) \bmod p} \\
                   &+ (-1)^{\lfloor\frac{2t+1}{p}\rfloor}y^{(2t + 1) \bmod p}
                      - (-1)^{\lfloor\frac{t+2}  {p}\rfloor}y^{(t + 2)   \bmod p} \\
                   &+ (-1)^{\lfloor\frac{t+1}  {p}\rfloor}y^{(t + 1)   \bmod p} 
                      - 1.
\end{align*}
Since $\deg\Phi_{2p}(y)=p-1\geq\deg Q_{S_t}^{-\!\bmod p}(y)$,
the only feasible cases are either $\Q_{S_t}^{-\!\bmod p}(y) = d \Phi_{2p}(y)$ for some constant~$d$, 
or $Q_{S_t}^{-\!\bmod p}(y)\equiv 0$.
The former cases is impossible, 
as $d \Phi_{2p}(y)$ has more nonzero terms that $Q_{S_t}^{-\!\bmod p}(y)$.
The latter case is also impossible,
as Claim~\ref{le-unique-rem} implies the existence of $f\in I_{S_t}$ with a unique remainder modulo~$p$ in $I_{S_t}$,
so that the coefficient of $y^{f\bmod p}$ in $Q_{S_t}^{-\!\bmod p}(y)$ is equal to~$\pm 1$, and not zero.
\end{proof}

\begin{lemma}
\label{le-final-b}
Let $t\not\equiv_{10} 1$.
For each $b\in\{3,5,6,7,10,14,15,21,30,42\}$,
the polynomial $Q_{S_t}(y)$ is not divisible by $\Phi_{b}(y)$.
\end{lemma}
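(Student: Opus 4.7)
The plan is to extend the modular-reduction technique used in the proof of Lemma~\ref{le-p-2p}. For any $b$, the condition $\Phi_b(y) \mid Q_{S_t}(y)$ is equivalent to $\Phi_b(y) \mid Q_{S_t}^{\!\bmod b}(y)$, and $Q_{S_t}^{\!\bmod b}(y)$ is a polynomial of degree strictly less than $b$ whose coefficients depend only on the residue $r = t \bmod b$ through the eight reduced exponents $\{0, t+1, t+2, 2t+1, 2t+2, 3t+1, 3t+2, 4t+3\} \bmod b$.

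For each of the ten values $b \in \{3, 5, 6, 7, 10, 14, 15, 21, 30, 42\}$ I would run through every residue class $r \in \{0, 1, \ldots, b-1\}$ of $t$ modulo $b$: substitute $r$ in place of $t$ in the exponent formulas, collect like terms to assemble $Q_{S_t}^{\!\bmod b}(y)$, and verify $\Phi_b(y) \nmid Q_{S_t}^{\!\bmod b}(y)$. Since $\deg Q_{S_t}^{\!\bmod b}(y) < b$ while $\deg \Phi_b(y) = \phi(b)$, divisibility can only hold if $Q_{S_t}^{\!\bmod b}(y)$ equals a low-degree polynomial multiple of $\Phi_b(y)$ (including the zero polynomial), which is easily decided by polynomial long division or by evaluating at the primitive $b$-th root of unity $e^{2 i\pi/b}$.

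The enumeration is expected to show that the only pair $(b, r)$ yielding divisibility anywhere in the table is $b = 10$ with $r = 1$: there one finds $Q_{S_t}^{\!\bmod 10}(y) = y^7 - y^5 + y^2 - 1 = (y^2 - 1)(y^5 + 1) = (y - 1)(y + 1)^2 \Phi_{10}(y)$, which is precisely the case ruled out by the hypothesis $t \not\equiv_{10} 1$. In every other case, the direct computation should confirm that $Q_{S_t}^{\!\bmod b}(y)$ is neither zero nor a polynomial multiple of $\Phi_b(y)$; for instance, in the illustrative sub-case $b = 6$, $r = 0$ one obtains $Q_{S_t}^{\!\bmod 6}(y) = (y-1)^3$, which is coprime to $\Phi_6(y) = y^2 - y + 1$.

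The main obstacle is not conceptual but the sheer volume of bookkeeping: across the ten values of $b$ there are $3 + 5 + 6 + 7 + 10 + 14 + 15 + 21 + 30 + 42 = 153$ residue classes to check. The cleanest presentation is likely a tabulation of the reductions $Q_{S_t}^{\!\bmod b}(y)$ indexed by $(b, r)$, mirroring the table given earlier for the generator set $S_3$. A single uniform shortcut seems unlikely, since the pattern of coefficient collisions among the eight reduced exponents is highly sensitive to the specific prime factorization of each~$b$.
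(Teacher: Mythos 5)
Your proposal is correct and follows essentially the same route as the paper: reduce the eight exponents of $Q_{S_t}(y)$ modulo~$b$, enumerate all residues $t\bmod b$ for each of the ten values of~$b$, and check divisibility by $\Phi_b(y)$ case by case (the paper tabulates the resulting remainders in an appendix). You also correctly identify the unique exceptional pair $b=10$, $t\equiv_{10}1$, where $Q_{S_t}^{\!\bmod 10}(y)=y^7-y^5+y^2-1=(y-1)(y+1)^2\Phi_{10}(y)$, which is exactly the case excluded by the hypothesis.
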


\begin{proof}
Note that the lemma implies that $b\in\{3,5,6,7,10,14,15,21,30,42\}$.
Assuming that, on the contrary, $\Phi_b(y)\mid Q_{S_t}(y)$ implies $\Phi_b(y)\mid Q_{S_t}^{\!\bmod b}(y)$.
Since the exponents of $Q_{S_t}^{\!\bmod b}(y)$ range between 0 and $b-1$,
and depend solely on the values of $b$ and~$t\bmod b$,
we can directly compute the remainders after dividing $Q_{S_t}^{\!\bmod b}(y)$ by $\Phi_b(y)$
for each $b\in\{3,5,6,7,10,14,15,21,30,42\}$ and each $t\bmod b\in\{0,\dots,b-1\}$.
These remainders are listed in Appendix~\ref{sc-appendix}.
As it can be seen from this appendix,
$\Phi_b(y)\mid Q_{S_t}^{\!\bmod b}(y)$ holds only if $b=10$ and $t\equiv_{10} 1$,
which is excluded by the assumption.
\end{proof}

Now we are in position to complete the proof of the main theorem.

\bigskip\noindent
{\em Proof of Theorem~\ref{th-main}}.\quad
Suppose, on the contrary, that 
for some odd $t\geq 3$, such that $t\not\equiv_{10}1$ and $t\not\equiv_{18} 15$, and some even $n\geq 4t+4$, 
the circulant graph $\Circ(n,S_t)$ is not a nut graph.
Then $\Phi_b(y)\mid P^*_{S_t}(y)$ for some $b\geq 3$,
which is equivalent to $\Phi_b(y)\mid Q_{S_t}(y)=(y-1)P^*_{S_t}(y)$.

By Lemma~\ref{le-square-free}, such $b$ must be square-free.
Let $b=p_1\cdots p_k$ with $2\leq p_1<\dots<p_k$ be a prime factor decomposition of~$b$.
Denote $b_j=p_1\cdots p_j$ for $j=1,\dots,k$, so that $b=b_k$.
Observe that we can apply Filaseta-Schinzel theorem repeatedly
for each sufficiently large prime factor of~$b$:
starting with $k'=k$,
as long as $p_{k'}\geq 11$,
Theorem~\ref{th-FS} applied to $\Phi_{b_{k'}}\mid Q_{S_t}(y)$, which has $N=8$ nonzero terms, and 
to the prime $p_{k'}$, which then satisfies condition~(\ref{eq-FS-condition}) by itself,
implies that $\Phi_{b_{k'-1}}\mid Q_{S_t}(y)$ as well.

Suppose now that 3, 5 and 7 are not among the prime factors of~$b$.
Applying Filaseta-Schinzel theorem repeatedly as above implies that
either $\Phi_{p_1}(y)\mid Q_{S_t}(y)$ if $p_1\neq 2$ (hence $p_1\geq 11$),
or $\Phi_{2p_2}(y)\mid Q_{S_t}(y)$ if $p_1=2$ (and so $p_2\geq 11$).
However, neither of these cases is possible by Lemma~\ref{le-p-2p}.

Next, suppose that at least one of 3, 5 and 7 is among the prime factors of~$b$.
Applying Filaseta-Schinzel theorem repeatedly as above implies that
$\Phi_{b_j}(y)\mid Q_{S_t}(y)$ where $j$ is the largest index such that $p_j\leq 7$.
Moreover, if $p_j=7$ and $p_{j-1}=5$,
Theorem~\ref{th-FS} can be applied again to $\Phi_{b_j}(y)\mid Q_{S_t}(y)$ and the primes $p_j=7$ and $p_{j-1}=5$
to conclude that also $\Phi_{b_{j-1}}(y)\mid Q_{S_t}(y)$
(and in such case we decrease~$j$ by one).
The resulting~$b_j$ belongs to the set
$$
\{3,5,7,2\cdot3,2\cdot5,2\cdot7,3\cdot5,3\cdot7,2\cdot3\cdot5,2\cdot3\cdot7\},
$$
but $\Phi_{b_j}(y)\mid Q_{S_t}(y)$ then contradicts Lemma~\ref{le-final-b}.
\hfill\qed

\section{Orders and degrees of circulant nut graphs}
\label{sc-other-generators}

Fowler et al.~\cite[Question 11]{FGGPS} asked to characterize 
the pairs $(n,d)$ satisfying requirements of Theorem~\ref{th-vtnut},
for which a $d$-regular vertex-transitive nut graph of order~$n$ exists.
In this section we suggest the answer to this question 
in the case of circulant nut graphs through further computational study.

Lemma~\ref{le-equally-many} tells us that
circulant nut graphs are necessarily $4t$-regular, 
and being vertex-transitive,
they have an even order $n\geq 4t+4$ by Theorem~\ref{th-vtnut}.
The computational study from the previous section shows that for most \emph{odd} values of~$t\leq1300$
there actually exists a single generator set~$S_t=\{1,\dots,2t+1\}\setminus\{t\}$ 
that yields a $4t$-regular circulant nut graph for any even order $n\geq 4t+4$.
In this section we perform a computational study of generator sets of $4t$-regular circulant nut graphs 
for the remaining small values of~$t$: even ones, 
and those odd $t$ such that either $t\equiv1\pmod{10}$ or $t\equiv15\pmod{18}$.
Our findings suggest that for each $t\geq 3$
there exists a single generator set that produces $4t$-regular circulant nut graphs for all feasible values of~$n$.

We start with a result that follows from Theorem~\ref{th-consecutive}.
Although it is applicable to arbitrary values of~$t$, we use it here to settle the case $t=1$.
\begin{theorem}
\label{th-2nd}
Let $t\in\N$. If $n\geq 4t+4$ is an even integer such that $\gcd(\frac n2,t)=1$, 
then there exists a $4t$-regular circulant nut graph of order~$n$.
\end{theorem}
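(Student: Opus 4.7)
The plan is to apply Theorem~\ref{th-consecutive} with a carefully chosen starting index~$x$. Writing $m=\frac{n}{2}$, Theorem~\ref{th-consecutive} tells us that $\Circ(n,\{x,x+1,\ldots,x+2t-1\})$ is a $4t$-regular nut graph of order~$n$ as soon as $x\in\N$, $x\leq m-2t$, $\gcd(m,t)=1$, and $\gcd(m,2x+2t-1)=1$. The coprimality $\gcd(m,t)=1$ is already given by hypothesis, so the problem reduces to exhibiting a positive integer $x\leq m-2t$ with $\gcd(m,2x+2t-1)=1$.

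The key observation is that both $m-1$ and $m-2$ are automatically coprime to~$m$, so the idea is to select $x$ so that $2x+2t-1$ equals whichever of these two values happens to be odd. I would split the argument according to the parity of~$m$.

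If $m$ is even, set $x=\frac{m-2t}{2}$. Then $m-2t$ is a positive even integer (since $m\geq 2t+2$ and both $m$ and $2t$ are even), so $x$ is a positive integer and clearly $x\leq m-2t$. A direct computation yields $2x+2t-1=m-1$, and $\gcd(m,m-1)=1$ trivially. If instead $m$ is odd, the condition $m\geq 2t+2$ combined with parity forces $m\geq 2t+3$, so $x=\frac{m-2t-1}{2}$ is a positive integer with $x\leq m-2t$. In this case $2x+2t-1=m-2$, and $\gcd(m,m-2)=\gcd(m,2)=1$ since $m$ is odd.

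In both parity cases all the hypotheses of Theorem~\ref{th-consecutive} hold, so $\Circ(n,\{x,x+1,\ldots,x+2t-1\})$ is a $4t$-regular nut graph of order~$n$. There is no real technical obstacle: the proof is essentially bookkeeping, verifying that the chosen $x$ is an admissible positive integer in each parity case. The only mild subtlety is the observation that an odd value of $m$ forces the strict inequality $m\geq 2t+3$, which is what makes the construction $x=\frac{m-2t-1}{2}$ land inside $\N$.
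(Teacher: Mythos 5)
Your proposal is correct and follows essentially the same route as the paper: both split on the parity of $\frac n2$ (equivalently $n \bmod 4$), choose $x=\frac n4-t$ resp.\ $x=\frac{n-2}4-t$, and verify that $2x+2t-1$ equals $\frac n2-1$ resp.\ $\frac n2-2$, which is coprime to $\frac n2$, before invoking Theorem~\ref{th-consecutive}. All the bookkeeping (positivity of $x$, the bound $n\geq 2x+4t$, and the forced inequality $\frac n2\geq 2t+3$ in the odd case) checks out.
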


\begin{proof}
We divide the proof in two cases depending on whether $n$ is divisible by 4 or not.

\emph{Case }$n\equiv0\pmod4$:\quad
In this case $\Circ(n,\{x,x+1,\dots,x+2t-1\})$ is a nut graph for $x=\frac n4-t$.
First, $2x+4t=\frac n2+2t<n$ since $2t+2\leq\frac n2$ by the assumption.
Further, $2x+2t-1=\frac n2-1$ is relatively prime to $\frac n2$.
As $\gcd(\frac n2, t)=1$ by the assumption,
from Theorem~\ref{th-consecutive} we conclude that $\Circ(n, \{x,x+1,\dots,x+2t-1\})$ is indeed a nut graph.

\emph{Case }$n\equiv2\pmod4$:\quad
In this case we have $n\geq4t+6$ because $n\equiv2\pmod 4$, so that $x=\frac{n-2}4-t\in\N$.
We have $2x+4t=\frac{n-2}2+2t<n$ since $2t<\frac n2$.
Further, $2x+2t-1=\frac n2-2$.
Since both $\frac n2$ and $\frac n2-2$ are odd in this case, we have $\gcd(\frac n2, 2x+2t-1)=1$.
As $\gcd(\frac n2, t)=1$ by the assumption,
from Theorem~\ref{th-consecutive} we conclude that $\Circ(n, \{x,x+1,\dots,x+2t-1\})$ is again a nut graph.
\end{proof}

For $t=1$ we necessarily have $\gcd(\frac n2,t)=1$ for any even~$n$,
and so we have the following result.
\begin{proposition}
\label{pr-4reg}
There exists a $4$-regular circulant nut graph for each even order~$n\geq 8$.
\end{proposition}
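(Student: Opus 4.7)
The plan is essentially a one-line application of Theorem~\ref{th-2nd}. Setting $t=1$ in that theorem, the hypothesis $\gcd(\frac{n}{2},t)=1$ becomes $\gcd(\frac{n}{2},1)=1$, which holds for every integer $n$. The lower bound condition $n\geq 4t+4$ becomes $n\geq 8$, matching the hypothesis of the proposition exactly. Hence Theorem~\ref{th-2nd} directly produces a $4$-regular circulant nut graph of order~$n$ for every even $n\geq 8$.

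For completeness and to make the construction explicit, I would briefly unpack what Theorem~\ref{th-2nd} yields in the two parity cases. When $n\equiv 0\pmod 4$, the theorem provides $\Circ(n,\{x,x+1\})$ with $x=\frac{n}{4}-1$, so the generator set is $\{\frac{n}{4}-1,\frac{n}{4}\}$; in particular, for $n=8$ we obtain $\Circ(8,\{1,2\})$. When $n\equiv 2\pmod 4$, the theorem provides $\Circ(n,\{x,x+1\})$ with $x=\frac{n-2}{4}-1$, giving the generator set $\{\frac{n-6}{4},\frac{n-2}{4}\}$; for $n=10$ this yields $\Circ(10,\{1,2\})$.

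There is no real obstacle here: the only substantive content is already packaged in Theorem~\ref{th-consecutive} (and its specialization Theorem~\ref{th-2nd}), which handles consecutive generator sets. The proposition simply records that the threshold $n\geq 4t+4$ of Theorem~\ref{th-vtnut} is tight in the circulant setting when $t=1$, so no further work is required.
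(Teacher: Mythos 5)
Your proposal is correct and matches the paper's own argument exactly: the paper derives Proposition~\ref{pr-4reg} by observing that for $t=1$ the condition $\gcd(\frac n2,t)=1$ holds automatically, so Theorem~\ref{th-2nd} applies to every even $n\geq 8$. Your explicit unpacking of the generator sets in the two parity cases is a correct (if optional) elaboration of the same route.
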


The following lemma shows that for even~$t$, 
regardless of Theorem~\ref{th-vtnut},
one cannot have a $4t$-regular circulant nut graph of order $4t+4$.

\begin{lemma}
\label{le-even-t}
For even~$t$, a $4t$-regular circulant nut graph has order at least $4t+6$.
\end{lemma}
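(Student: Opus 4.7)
The plan is to suppose for contradiction that $G=\Circ(n,S)$ is a $4t$-regular circulant nut graph with $t$ even and $n=4t+4$, and exhibit an explicit $j\in\{1,\dots,n/2-1\}$ with $P(\omega^j)=0$.

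First I would use Lemma~\ref{le-equally-many} to constrain~$S$. Since $S\subseteq\{1,\dots,n/2-1\}=\{1,\dots,2t+1\}$, $|S|=2t$, and $S$ must contain $t$ odd and $t$ even integers, and since $\{1,\dots,2t+1\}$ contains exactly $t+1$ odd and $t$ even integers, the set $S$ must include every even integer in $\{1,\dots,2t+1\}$ and omit exactly one odd integer. Hence $S=\{1,\dots,2t+1\}\setminus\{2k-1\}$ for some $k\in\{1,\dots,t+1\}$.

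The key computation is to evaluate $P(\omega^j)$ for odd~$j$. Writing $\zeta=\omega^j$ and summing the geometric series, I would show
\[
\sum_{s=-(2t+1)}^{2t+1}\zeta^s \;=\; \frac{\zeta^{2t+2}-\zeta^{-(2t+1)}}{\zeta-1}\;=\;-\zeta^{n/2},
\]
using $\zeta^n=1$ to replace $\zeta^{-(2t+1)}$ by $\zeta^{2t+3}$. Since $\omega^{n/2}=-1$, we have $\zeta^{n/2}=(-1)^j$, so for odd~$j$ the sum over $\{1,\dots,2t+1\}$ of $\zeta^s+\zeta^{-s}$ equals $0$. Therefore
\[
P(\omega^j)=-\bigl(\zeta^{2k-1}+\zeta^{-(2k-1)}\bigr)=-2\cos\!\left(\frac{2\pi(2k-1)j}{n}\right).
\]

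The final step is to find odd $j\in\{1,\dots,n/2-1\}$ making the cosine vanish, i.e.\ satisfying $(2k-1)j=(2m+1)(t+1)$ for some integer~$m$. Setting $d=\gcd(2k-1,t+1)$ and $j=(t+1)/d$, the identity holds with $(2m+1)=(2k-1)/d$. Since $t$ is even, $t+1$ is odd, hence so is any divisor~$d$ and the quotient $(t+1)/d$; thus $j$ is odd. Moreover $1\le j\le t+1<n/2$, so $j$ lies in the required range. Then $P(\omega^j)=0$, contradicting Lemma~\ref{le-even-order}.

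The proof is essentially direct once the reduction to $S=\{1,\dots,2t+1\}\setminus\{2k-1\}$ is made; the only mildly delicate part is ensuring the chosen~$j$ is \emph{odd}, which is precisely where the hypothesis that $t$ is even is used (through the oddness of $t+1$ and therefore of every divisor of $t+1$).
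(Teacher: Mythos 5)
Your proof is correct and follows essentially the same route as the paper: both first use Lemma~\ref{le-equally-many} to force $S=\{1,\dots,2t+1\}\setminus\{p\}$ with $p$ odd, and both then exhibit a zero of $P$ at an index other than $n/2$ (the paper simply evaluates at $j=n/4$, where $\omega^{n/4}=i$, and checks the cancellation of $i^b+i^{-b}$ over consecutive exponents; your geometric-series identity giving $P(\omega^j)=-2\cos\bigl(2\pi(2k-1)j/n\bigr)$ for odd $j$ is a slightly more general packaging of the same cancellation, and your witness $j=(t+1)/\gcd(2k-1,t+1)$ is a divisor of the paper's $n/4=t+1$, which also works directly). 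Both arguments use the evenness of $t$ in the same essential place, to guarantee the chosen index is compatible with the parity requirement.
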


\begin{proof}
Suppose that $G=\Circ(n,S)$, $n=4t+4$, is a $4t$-regular nut graph for some generator set~$S$.
From Lemma~\ref{le-equally-many} we conclude that
$S$ is a $2t$-element subset of $\{1,\dots,2t+1\}$,
hence $S=\{1,\dots,2t+1\}\setminus\{p\}$.
Here $p$ must be odd in order for $S$ to have equally many even and odd elements.

For $\omega=e^{i\frac{2\pi}n}$ we have $\omega^\frac n4=i$, so that
$$
P(\omega^\frac{n}{4}) 
= \left(i + \frac 1i \right)+\left(i^2 + \frac{1}{i^2}\right) + \cdots + \left(i^{2t+1} + \frac{1}{i^{2t+1}}\right) 
 - \left(i^p + \frac{1}{i^p}\right)
$$
From
$$
i^b + \frac 1{i^b} 
= \left\{\begin{array}{rl}
    0, & \text{if $b$ is odd}\\
   -2, & \text{if }b \equiv 2 \pmod 4\\
   +2, & \text{if }b \equiv 0 \pmod 4
   \end{array}\right.
$$
and the fact that $t$ is even, we have
$$
P(\omega^\frac n4) = (0 - 2 + 0 + 2) + \cdots + (0 - 2 + 0 + 2) + 0 - 0 = 0
$$
Since $P(\omega^j)=0$ for some $j\neq n/2$,
Lemma~\ref{le-even-order} implies that $G$ is not a nut graph.
\end{proof}

Thus for $t=2$,
an $8$-regular circulant nut graph has order at least~14,
and Theorem~\ref{th-2nd} yields $\Circ(14,\{1,2,3,4\})$ as an example of such nut graph.
Further,
it is straightforward to show that there does not exist an $8$-regular circulant nut graph of order~$16$.
By Lemma~\ref{le-equally-many}
the generator set of such hypothetical graph has to contain two even and two odd integers from the set $\{1, 2, 3, 4, 5, 6, 7\}$. 
There are ${4\choose 2}{3\choose 2}$ generator sets with this property:
\begin{align*}
\{1,3,2,4\}, && \{1,5,2,4\}, && \{1,7,2,4\}, && \{3,5,2,4\}, && \{3,7,2,4\}, && \{5,7,2,4\}, \\
\{1,3,2,6\}, && \{1,5,2,6\}, && \{1,7,2,6\}, && \{3,5,2,6\}, && \{3,7,2,6\}, && \{5,7,2,6\}, \\
\{1,3,4,6\}, && \{1,5,4,6\}, && \{1,7,4,6\}, && \{3,5,4,6\}, && \{3,7,4,6\}, && \{5,7,4,6\}.
\end{align*}
However, none of them yields a nut graph of order~16,
as the multiplicity of eigenvalue zero in each of them is either 3 or 5 or~9.
Finally, for the generator set $T=\{3,4,5,8\}$
we have 
$$
P^*_T(y) = y^8 P_T(y) = y^{16} + y^{13} + y^{12} + y^{11} + y^5 + y^4 + y^3 + 1.
$$
The iterative method based on the bound~(\ref{eq-phi-lower-bound}) quickly yields 
that $\phi(b)\leq16 \Rightarrow b\leq 68$.
Dividing~$P^*_T(y)$ by~$\Phi_b(t)$ for those $3\leq b\leq 68$ for which actually $\phi(b)\leq 16$,
reveals that $P^*_T(y)$ is not divisible by $\Phi_b(y)$ for any $b\geq 3$, 
and consequently that, apart from~$-1$, 
$P^*_T(y)$ does not have roots of the form $e^{i\frac{2\pi a}b}$ for any $a,b\in \mathbb{Z}$, $\gcd(a,b)=1$.
Hence $\Circ(n,\{3,4,5,8\})$ is a nut graph for any even $n\geq 18$.
We can collect these arguments in the following proposition.
\begin{proposition}
\label{pr-8reg}
There exists an $8$-regular circulant nut graph of order~$n$ 
if and only if $n$ is even and either $n=14$ or $n\geq18$.
\end{proposition}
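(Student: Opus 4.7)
The plan is to handle three parts separately: the necessary conditions on $n$, the isolated case $n=14$, and the infinite family of even $n\geq 18$.

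For the necessity direction, Lemma~\ref{le-even-order} immediately forces $n$ to be even, and Lemma~\ref{le-even-t} applied with $t=2$ gives $n\geq 4t+6=14$. It then remains to rule out $n=16$. By Lemma~\ref{le-equally-many}, any $8$-regular circulant graph of order~$16$ has a generator set that is a $4$-element subset of $\{1,\dots,7\}$ containing two even and two odd integers, so there are only $\binom{3}{2}\binom{4}{2}=18$ candidates. For each of these $18$ generator sets I would directly compute the multiplicity of the eigenvalue~$0$ by evaluating $P(\omega^j)$ at the $16$-th roots of unity, verifying case by case that the multiplicity never equals~$1$.

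For $n=14$, Theorem~\ref{th-2nd} applies with $t=2$, since $\gcd(\tfrac{n}{2},t)=\gcd(7,2)=1$; the proof of that theorem in the subcase $n\equiv 2\pmod 4$ produces the explicit example $\Circ(14,\{1,2,3,4\})$, settling this case.

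For even $n\geq 18$, I would follow the strategy used to settle Ba\v si\'c et al.'s Conjecture~9, namely exhibit a single fixed generator set that works for every such~$n$. A natural candidate, with two even and two odd entries as required by Lemma~\ref{le-equally-many}, is $T=\{3,4,5,8\}$. By Remark~\ref{re-pet}, it suffices to show $P_T(\omega^j)\neq 0$ for all $j\in\{1,\dots,n/2-1\}$, equivalently that the auxiliary polynomial $P^*_T(y)=y^8 P_T(y)$ of degree~$16$ has no root of the form $e^{2i\pi j/n}$ in this range. Any such root would force $\Phi_b(y)\mid P^*_T(y)$ for some $b\geq 3$ with $\phi(b)\leq 16$; the lower bound~(\ref{eq-phi-lower-bound}) together with a few iterations of the fixed-point method for $f_{16}(b)=16\bigl(e^\gamma\log\log b+\tfrac{2.51}{\log\log b}\bigr)$ confines $b$ to a finite list (explicitly $b\leq 68$), and for each such $b$ with $\phi(b)\leq 16$ one checks by direct polynomial division that $\Phi_b(y)\nmid P^*_T(y)$.

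The main obstacle is computational rather than conceptual. Both the exclusion of $n=16$ and the cyclotomic divisibility test for $T=\{3,4,5,8\}$ are finite case analyses with no structural shortcut of the kind provided by Claim~\ref{le-unique-rem} or Lemma~\ref{le-2t+1-t} in the preceding sections: here one faces a single polynomial rather than a $t$-parameterized family, so no uniform algebraic argument is available and the proof reduces to a careful but routine verification by polynomial arithmetic.
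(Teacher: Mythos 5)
Your proposal is correct and follows essentially the same route as the paper: Lemma~\ref{le-even-t} for the lower bound, Theorem~\ref{th-2nd} producing $\Circ(14,\{1,2,3,4\})$, an exhaustive check of the $18$ balanced generator sets to exclude $n=16$, and the cyclotomic divisibility test on $P^*_T(y)$ for $T=\{3,4,5,8\}$ with the bound $\phi(b)\le 16\Rightarrow b\le 68$ to cover all even $n\ge 18$. The paper's argument is identical in every step, including the choice of $T$.
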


%Fowler's question asks about vertex-transitive nut graphs. 
%If we have a $4t$-regular vertex-transitive nut graph of order $n=4t+4$,
%then its complement is a vertex-transitive cubic graph of the same order,
%and its eigenvalues are $-\lambda-1$ for each eigenvalue $\lambda$ of the original graph
%(apart from the degree, where it is $n-\lambda-1$).
%Hence complement of the $4t$-regular nut graph of order $4t+4$ is
%a cubic graph of order $4t+4$ with a simple eigenvalue $-1$ and no zero entries in its eigenvector.
%Prisms, which are the sums $C_{2t+2} + K_2$, are examples of such graphs.
%They have simple eigenvalue -1 if and only if t is even (see also Section 6.3 of Krystal Guo, Bojan Mohar, arXiv:2002.05694.

In the rest of this section,
we will say that the generator set~$S$ is \emph{universal} 
if the circulant graph $\Circ(n, S)$ is a nut graph for each even $n \ge 2\max(S)+2$.
Theorem~\ref{th-main} reveals that the generator set $S_t=\{1,\dots,2t+1\}\setminus\{t\}$ is universal
for all odd $t\geq 3$ such that $t\not\equiv_{10} 1$ and $t\not\equiv_{18} 15$.

Interestingly,
further computational study shows that
for each of the remaining odd values $3\leq t\leq 1300$ such that either $t\equiv_{10}1$ or $t\equiv_{18}15$
there still exists a universal generator set of the form $\{1,\dots,2t+1\}\setminus\{p_t\}$, for some odd~$p_t$.
For some values of~$t$, such as $t=15$ or $t=21$, there exists only a single possibility for~$p_t$,
while for other values, such as $t=11$ or $t=31$, there exists a number of possibilities for~$p_t$.
More information about the possible values of~$p_t$ is contained in the file \emph{oddpt.tex} in~\cite{zenodo},
while in Table~\ref{tb-generators} we list only the smallest possibility for~$p_t$ for $t\leq 120$.

For even values of $t$, $4\leq t\leq 1300$,
universal generator sets exist in abundance in the form $\{1,\dots,2t+2\}\setminus\{q_t,r_t\}$,
for some $q_t$ and~$r_t$ of different parity.
Table~\ref{tb-generators} lists only one possibility for $\{q_t,r_t\}$ for each even $4\leq t\leq 120$,
while further such pairs may be found in the file \emph{evenqtrt.tex} in~\cite{zenodo}.

The existence of universal generator sets in all cases $3\leq t\leq 1300$ leads us 
to conclude this manuscript with the following conjecture that builds upon Theorem~\ref{th-main}.
\begin{conjecture}
\label{co-main-2}
For each odd $t\geq 3$
there exists~$p_t$ such that % $\{1,\dots,2t+1\}\setminus\{p_t\}$ is a universal generator set, i.e., 
$\Circ(n,\{1,\dots,2t+1\}\setminus\{p_t\})$ is a nut graph for each even $n\geq 4t+4$.
For each even $t\geq 4$
there exist $q_t$ and~$r_t$ such that % $\{1,\dots,2t+2\}\setminus\{q_t,r_t\}$ is a universal generator set, i.e.,
$\Circ(n,\{1,\dots,2t+2\}\setminus\{q_t,r_t\})$ is a nut graph for each even $n\geq 4t+6$.
\end{conjecture}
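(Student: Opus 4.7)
The plan is to extend the framework of Section~\ref{sc-almost-consecutive} to handle the more flexible families of generator sets appearing in the conjecture. For any generator set $S$ of size $2t$ containing equally many odd and even integers (as required by Lemma~\ref{le-equally-many}), set $P_S(y)=\sum_{s\in S}(y^s+y^{-s})$ and $P^*_S(y)=y^{\max(S)}P_S(y)$. By Remark~\ref{re-pet}, $\Circ(n,S)$ is a nut graph for every even $n\geq 2\max(S)+2$ if and only if $\Phi_b(y)\nmid P^*_S(y)$ for every $b\geq 3$. The task is therefore to exhibit, for each $t$, at least one generator set of the prescribed shape satisfying this universality condition.

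First I would parametrize the candidates: for odd~$t$, the $t+1$ choices of the odd omitted element $p\in\{1,3,\ldots,2t+1\}$; for even~$t$, the $(t+1)^2$ pairs $(q,r)$ with one odd and one even, drawn from $\{1,\ldots,2t+2\}$. In each case, writing out $(y-1)P^*_S(y)$ explicitly reveals an auxiliary lacunary polynomial with a bounded number $N$ of nonzero terms (eight in the odd-$t$ case, at most ten in the even-$t$ case). Applying Theorem~\ref{th-FS} iteratively, together with an analogue of Lemma~\ref{le-square-free} ruling out non-square-free $b$, reduces the potential cyclotomic divisors $\Phi_b(y)$ of $P^*_S(y)$ to a finite explicit set of small moduli $\mathcal{B}$, comparable in size to the set appearing in Lemma~\ref{le-final-b}.

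The key combinatorial step is then to show that for every $t$ in every residue class modulo $\operatorname{lcm}(\mathcal{B})$, there exists a choice of $p_t$ (respectively $(q_t,r_t)$) that simultaneously avoids $\Phi_b(y)\mid P^*_S(y)$ for \emph{all} $b\in\mathcal{B}$. Via the reduction $y^c\equiv_{\Phi_b(y)}y^{c\bmod b}$ exploited in Lemmas~\ref{le-square-free}--\ref{le-final-b}, each condition $\Phi_b(y)\mid P^*_S(y)$ unfolds into an explicit congruence on the exponents of $P^*_S(y)$, i.e., on $(t,p_t)\bmod b$ (or the triple $(t,q_t,r_t)\bmod b$). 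For each fixed $b\in\mathcal{B}$ and each residue of $t$ modulo $b$, one would enumerate the finitely many forbidden choices; proving that their union over $b\in\mathcal{B}$ never exhausts all candidates completes the argument.

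The principal obstacle lies in exactly this uniform-survival step, for two reasons. First, the exceptional cases $t\equiv_{10}1$ and $t\equiv_{18}15$ for the original $S_t=\{1,\ldots,2t+1\}\setminus\{t\}$ show that arithmetic coincidences between $\mathcal{B}$ and the exponent pattern of $P^*_S$ can genuinely eliminate the most natural candidate, so a uniform closed-form $p_t=f(t)$ is unlikely to exist. Second, while a counting argument can probably handle large~$t$ (the total number of forbidden values is bounded independently of~$t$, whereas the number of candidates grows with~$t$), one must still certify every small $t$ not handled asymptotically. A plausible strategy is therefore to combine the Filaseta--Schinzel reduction with a Chinese remainder theorem analysis of the forbidden residue sets modulo $\operatorname{lcm}(\mathcal{B})$, supplemented by the computational verification already tabulated in~\cite{zenodo} for $t\leq 1300$.
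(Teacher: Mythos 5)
The statement you are trying to prove is Conjecture~\ref{co-main-2}, which the paper explicitly leaves \emph{open}: the authors offer no proof, only computational evidence that suitable $p_t$ (resp.\ $q_t,r_t$) exist for all $t\leq 1300$. So there is no proof in the paper to compare against, and the question is whether your outline closes the gap. It does not. You correctly identify the right machinery --- the Filaseta--Schinzel reduction, a square-free lemma, and reduction of each divisibility $\Phi_b(y)\mid P^*_S(y)$ to congruence conditions on the exponents modulo~$b$ --- and you correctly locate the difficulty in the ``uniform survival'' step. But that step is precisely where your argument stops being a proof: you write that a counting argument ``can probably handle large~$t$'' and that small~$t$ would be ``supplemented by computational verification,'' which is a research plan, not a demonstration.

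Two concrete problems with the plan as stated. First, your counting heuristic is miscalibrated: for each $b\in\mathcal{B}$ the condition $\Phi_b(y)\mid P^*_S(y)$ depends only on $(t\bmod b,\,p\bmod b)$, so the forbidden set of $p$ is a union of residue classes modulo $\operatorname{lcm}(\mathcal{B})$; the number of forbidden \emph{values} of $p$ therefore grows linearly with $t$, not ``bounded independently of~$t$.'' The survival argument must be a covering argument over residue classes modulo $\operatorname{lcm}(\mathcal{B})$ (a finite but large and unexecuted verification), and it is a priori possible that for some residue class of~$t$ every admissible residue of~$p$ is forbidden --- nothing in your outline rules this out. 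Second, the ``analogue of Lemma~\ref{le-square-free}'' cannot simply be assumed: the paper's proof of that lemma leans heavily on the specific exponent set $I_{S_t}$ of $Q_{S_t}$ via Claim~\ref{le-unique-rem} (the unique-remainder property), and for a general omitted element $p$ the exponent set changes, terms can collide and cancel, and the unique-remainder property can fail for additional pairs $(t,p)$ beyond the known exceptions $t\equiv_{10}1$ and $t\equiv_{18}15$. Each of these steps would need to be proved uniformly in $p$ (and in $q,r$ for even $t$), and until they are, the conjecture remains open.
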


\begin{table}
\begin{center}
{\scriptsize
\begin{tabular}{rlrlrlrl}
\toprule $t$ & Generator set & $t$ & Generator set & $t$ & Generator set & $t$ & Generator set \\ 
\midrule
   & & 
   31 & $\{1,\dots,63\}\setminus\{5\}$ & 
     61 & $\{1,\dots,123\}\setminus\{5\}$ &  
       91 & $\{1,\dots,183\}\setminus\{5\}$ \\
3 & $S_3$ & % $\{1,\dots,7\}\setminus\{3\}$ & 
   33 & $\{1,\dots,67\}\setminus\{27\}$ & 
     63 & $S_{63}$ & % $\{1,\dots,127\}\setminus\{63\}$ & 
       93 & $S_{93}$ \\ % $\{1,\dots,187\}\setminus\{93\}$ \\
5 & $S_5$ & % $\{1,\dots,11\}\setminus\{5\}$ & 
  35 & $S_{35}$ & % $\{1,\dots,71\}\setminus\{35\}$ & 
    65 & $S_{65}$ & % $\{1,\dots,131\}\setminus\{65\}$ & 
      95 & $S_{95}$ \\ % $\{1,\dots,191\}\setminus\{95\}$ \\
7 & $S_7$ & % $\{1,\dots,15\}\setminus\{7\}$ & 
  37 & $S_{37}$ & % $\{1,\dots,75\}\setminus\{37\}$ & 
    67 & $S_{67}$ & % $\{1,\dots,135\}\setminus\{67\}$ & 
      97 & $S_{97}$ \\ % $\{1,\dots,195\}\setminus\{97\}$ \\
9 & $S_9$ & % $\{1,\dots,19\}\setminus\{9\}$ & 
  39 & $S_{39}$ & % $\{1,\dots,79\}\setminus\{39\}$ & 
    69 & $\{1,\dots,139\}\setminus\{9\}$ & 
      99 & $S_{99}$ \\ % $\{1,\dots,199\}\setminus\{99\}$ \\
11 & $\{1,\dots,23\}\setminus\{5\}$ & 
  41 & $\{1,\dots,83\}\setminus\{5\}$ & 
    71 & $\{1,\dots,143\}\setminus\{5\}$ & 
      101 & $\{1,\dots,203\}\setminus\{5\}$ \\
13 & $S_{13}$ & % $\{1,\dots,27\}\setminus\{13\}$ & 
  43 & $S_{43}$ & % $\{1,\dots,87\}\setminus\{43\}$ & 
    73 & $S_{73}$ & % $\{1,\dots,147\}\setminus\{73\}$ & 
      103 & $S_{103}$ \\ % $\{1,\dots,207\}\setminus\{103\}$ \\
15 & $\{1,\dots,31\}\setminus\{27\}$ & 
  45 & $S_{45}$ & % $\{1,\dots,91\}\setminus\{45\}$ & 
    75 & $S_{75}$ & % $\{1,\dots,151\}\setminus\{75\}$ & 
      105 & $\{1,\dots,211\}\setminus\{45\}$ \\
17 & $S_{17}$ & % $\{1,\dots,35\}\setminus\{17\}$ & 
  47 & $S_{47}$ & % $\{1,\dots,95\}\setminus\{47\}$ & 
    77 & $S_{77}$ & % $\{1,\dots,155\}\setminus\{77\}$ & 
      107 & $S_{107}$ \\ % $\{1,\dots,215\}\setminus\{107\}$ \\
19 & $S_{19}$ & % $\{1,\dots,39\}\setminus\{19\}$ & 
  49 & $S_{49}$ & % $\{1,\dots,99\}\setminus\{49\}$ & 
    79 & $S_{79}$ & % $\{1,\dots,159\}\setminus\{79\}$ & 
      109 & $S_{109}$ \\ % $\{1,\dots,209\}\setminus\{109\}$ \\
21 & $\{1,\dots,43\}\setminus\{3\}$ & 
  51 & $\{1,\dots,103\}\setminus\{45\}$ & 
    81 & $\{1,\dots,163\}\setminus\{3\}$ & 
      111 & $\{1,\dots,223\}\setminus\{3\}$ \\
23 & $S_{23}$ & % $\{1,\dots,47\}\setminus\{23\}$ & 
  53 & $S_{53}$ & % $\{1,\dots,107\}\setminus\{53\}$ & 
    83 & $S_{83}$ & % $\{1,\dots,167\}\setminus\{83\}$ & 
      113 & $S_{113}$ \\ % $\{1,\dots,227\}\setminus\{113\}$ \\
25 & $S_{25}$ & % $\{1,\dots,51\}\setminus\{25\}$ & 
  55 & $S_{55}$ & % $\{1,\dots,111\}\setminus\{55\}$ & 
    85 & $S_{85}$ & % $\{1,\dots,171\}\setminus\{85\}$ & 
      115 & $S_{115}$ \\ % $\{1,\dots,231\}\setminus\{115\}$ \\
27 & $S_{27}$ & % $\{1,\dots,55\}\setminus\{27\}$ & 
  57 & $S_{57}$ & % $\{1,\dots,115\}\setminus\{57\}$ & 
    87 & $\{1,\dots,175\}\setminus\{27\}$ & 
      117 & $S_{117}$ \\ % $\{1,\dots,235\}\setminus\{117\}$ \\
29 & $S_{29}$ & % $\{1,\dots,59\}\setminus\{29\}$ & 
  59 & $S_{59}$ & % $\{1,\dots,119\}\setminus\{59\}$ & 
    89 & $S_{89}$ & % $\{1,\dots,179\}\setminus\{89\}$ & 
      119 & $S_{119}$ \\ % $\{1,\dots,239\}\setminus\{119\}$ \\
\midrule
   & & 
   32 & $\{1,\dots,66\}\setminus\{3, 4\}$ &
     62 & $\{1,\dots,126\}\setminus\{1,12\}$ &  
       92 & $\{1,\dots,186\}\setminus\{1,24\}$ \\
4 & $\{1,\dots,10\}\setminus\{4,5\}$ & 
  34 & $\{1,\dots,70\}\setminus\{1,40\}$ & 
    64 & $\{1,\dots,130\}\setminus\{1,8\}$ & 
      94 & $\{1,\dots,190\}\setminus\{1,8\}$ \\
6 & $\{1,\dots,14\}\setminus\{1,12\}$ & 
  36 & $\{1,\dots,74\}\setminus\{1,48\}$& 
    66 & $\{1,\dots,134\}\setminus\{1,48\}$ & 
      96 & $\{1,\dots,194\}\setminus\{1,48\}$ \\
8 & $\{1,\dots,18\}\setminus\{3,4\}$ & 
  38 & $\{1,\dots,78\}\setminus\{1,48\}$ & 
    68 & $\{1,\dots,138\}\setminus\{1,12\}$ & 
      98 & $\{1,\dots,198\}\setminus\{1,24\}$ \\
10 & $\{1,\dots,22\}\setminus\{5,8\}$ & 
  40 & $\{1,\dots,82\}\setminus\{1,4\}$ & 
    70 & $\{1,\dots,142\}\setminus\{1,4\}$ & 
      100 & $\{1,\dots,202\}\setminus\{1,4\}$ \\
12 & $\{1,\dots,26\}\setminus\{3,4\}$ & 
  42 & $\{1,\dots,86\}\setminus\{1,24\}$ & 
    72 & $\{1,\dots,146\}\setminus\{1,12\}$ & 
      102 & $\{1,\dots,206\}\setminus\{1,12\}$ \\
14 & $\{1,\dots,30\}\setminus\{3,16\}$ & 
  44 & $\{1,\dots,90\}\setminus\{1,72\}$ & 
    74 & $\{1,\dots,150\}\setminus\{1,48\}$ & 
      104 & $\{1,\dots,210\}\setminus\{1,12\}$ \\
16 & $\{1,\dots,34\}\setminus\{4,5\}$ & 
  46 & $\{1,\dots,94\}\setminus\{1,8\}$ & 
    76 & $\{1,\dots,154\}\setminus\{1,28\}$ & 
      106 & $\{1,\dots,214\}\setminus\{1,8\}$ \\
18 & $\{1,\dots,38\}\setminus\{8,9\}$ & 
  48 & $\{1,\dots,98\}\setminus\{1,24\}$ & 
    78 & $\{1,\dots,158\}\setminus\{1,36\}$ & 
      108 & $\{1,\dots,218\}\setminus\{1,12\}$ \\
20 & $\{1,\dots,42\}\setminus\{3,8\}$ & 
  50 & $\{1,\dots,102\}\setminus\{1,24\}$ & 
    80 & $\{1,\dots,162\}\setminus\{1,36\}$ & 
      110 & $\{1,\dots,222\}\setminus\{1,60\}$ \\
22 & $\{1,\dots,46\}\setminus\{1,4\}$ & 
  52 & $\{1,\dots,106\}\setminus\{1,4\}$ & 
    82 & $\{1,\dots,166\}\setminus\{1,8\}$ & 
      112 & $\{1,\dots,226\}\setminus\{1,4\}$ \\
24 & $\{1,\dots,50\}\setminus\{1,12\}$ & 
  54 & $\{1,\dots,110\}\setminus\{1,48\}$ & 
    84 & $\{1,\dots,170\}\setminus\{1,48\}$ & 
      114 & $\{1,\dots,230\}\setminus\{1,72\}$ \\
26 & $\{1,\dots,54\}\setminus\{8,9\}$ & 
  56 & $\{1,\dots,114\}\setminus\{1,48\}$ & 
    86 & $\{1,\dots,174\}\setminus\{1,12\}$ & 
      116 & $\{1,\dots,234\}\setminus\{1,72\}$ \\
28 & $\{1,\dots,58\}\setminus\{1,4\}$ & 
  58 & $\{1,\dots,118\}\setminus\{1,32\}$ & 
    88 & $\{1,\dots,178\}\setminus\{1,4\}$ & 
      118 & $\{1,\dots,238\}\setminus\{1,4\}$ \\
30 & $\{1,\dots,62\}\setminus\{3,8\}$ & 
  60 & $\{1,\dots,122\}\setminus\{1,24\}$ & 
    90 & $\{1,\dots,182\}\setminus\{1,24\}$ & 
      120 & $\{1,\dots,242\}\setminus\{1,24\}$ \\
\bottomrule
\end{tabular}
\caption{Examples of universal generator sets for $4t$-regular circulant nut graphs
              for odd and even~$t$, $3\leq t\leq 120$.}      
\label{tb-generators}        
}
\end{center}
\end{table}

%%%%%%%%%%%%%%%%%%%%%%%%%%%%%%%%%%%%%%%%%%%%%%%%%
\subsection*{Acknowledgements}

We are grateful to the following persons who responded to our question on \emph{Mathoverflow}:
to Fedor Petrov for reteaching us how to use cyclotomic polynomials in Sections 4 and~5,
to Vlad Matei for pointing to a reference~\cite{FiSch} on Filaseta-Schinzel theorem on divisibility by cyclotomic polynomials, and
to Richard Stanley for comments about Galois groups of polynomials.

\appendix
\section{Table of remainders $Q_{S_t}^{\!\bmod b}(y) \bmod{\Phi_b(y)}$ for \\ 
              $b\!\in\!\{3,5,6,7,10,14,15,21,30,42\}$ and all $t\bmod b$}
\label{sc-appendix}              

{\scriptsize
\begin{longtable}{rl}
\toprule $t \bmod 3$ & $Q_{S_t}^{\!\bmod 3}(y) \bmod{\Phi_3(y)}$ \\
\midrule
0 & $6 y+3$ \\
1 & $y-1$ \\
2 & $-y-2$ \\
\midrule $t \bmod 5$ & $Q_{S_t}^{\!\bmod 5}(y) \bmod{\Phi_5(y)}$ \\ 
\midrule
0 & $y^3-3 y^2+3 y-1$ \\
1 & $2 y^2-2$ \\
2 & $y^3+2 y^2+y+1$ \\
3 & $-2 y^3-2 y-1$ \\
4 & $-y^2-2 y-2$ \\
\midrule $t \bmod 6$ & $Q_{S_t}^{\!\bmod 6}(y) \bmod{\Phi_6(y)}$ \\ 
\midrule
0 & $1$ \\
1 & $3y-3$ \\
2 & $-y$ \\
3 & $-3$ \\
4 & $y-1$ \\
5 & $-3y$ \\
\midrule $t \bmod 7$ & $Q_{S_t}^{\!\bmod 7}(y) \bmod{\Phi_7(y)}$ \\
\midrule
0 & $y^3-3 y^2+3 y-1$ \\
1 & $-y^5+y^2$ \\
2 & $2 y^5+y^4+2 y^3+y^2+1$ \\
3 & $-y^5+y^3$ \\
4 & $2 y^5-y^3+y^2-2$ \\
5 & $-2 y^5-2 y^3+y^2-y-3$ \\
6 & $-y^4-y^3-y^2-2 y-2$ \\
\midrule $t \bmod 10$ & $Q_{S_t}^{\!\bmod{10}}(y) \bmod{\Phi_{10}(y)}$ \\ 
\midrule
 0 & $y^3-3 y^2+3 y-1$ \\
 1 & $0$ \\
 2 & $y^3+y-1$ \\
 3 & $2 y^3-2 y^2-1$ \\
 4 & $-2 y^3+3 y^2-2 y$ \\
 5 & $y^3+y^2-y-1$ \\
 6 & $-2 y$ \\
 7 & $-y^3+3 y-3$ \\
 8 & $-3$ \\
 9 & $-2 y^3+y^2-2 y$ \\
\midrule $t \bmod 14$ & $Q_{S_t}^{\!\bmod{14}}(y) \bmod{\Phi_{14}(y)}$ \\ 
\midrule
 0 & $y^3-3 y^2+3 y-1$ \\
 1 & $-y^5+y^2-2$ \\
 2 & $-y^4+y^2-1$ \\
 3 & $-y^5+2 y^4-y^3+2 y-2$ \\
 4 & $2 y^4-y^3+y^2-2 y$ \\
 5 & $2 y^5-2 y^4-y^2+y-1$ \\
 6 & $-2 y^5+3 y^4-3 y^3+3 y^2-2 y$ \\
 7 & $y^3+y^2-y-1$ \\
 8 & $y^5-2 y^4+2 y^3-y^2-2$ \\
 9 & $y^4-2 y^3+y^2-2 y+1$ \\
 10 & $y^5-2 y^4+y^3+2 y-2$ \\
 11 & $2 y^5-2 y^4+3 y^3-3 y^2+2 y-2$ \\
 12 & $-y^2-y-1$ \\
 13 & $-2 y^5+y^4-y^3+y^2-2 y$ \\
\midrule $t \bmod 15$ & $Q_{S_t}^{\!\bmod{15}}(y) \bmod{\Phi_{15}(y)}$ \\ 
\midrule
 0 & $y^3-3 y^2+3 y-1$ \\
 1 & $y^7-y^5+y^2-1$ \\
 2 & $-2 y^6+2 y^5-2 y^4+2 y^3-2 y$ \\
 3 & $y^6-y^5+y^3$ \\
 4 & $y^7-3 y^6+3 y^5+y^4-2 y^3+2 y^2-y-1$ \\
 5 & $y^7-y^5+y^4-y^3+y-2$ \\
 6 & $-y^7-y^6+y^5-2 y$ \\
 7 & $y^6+y^3-y^2+1$ \\
 8 & $y^7+y^5-2 y^3+2 y^2+y-2$ \\
 9 & $y^7-y^6-y^5+y^4-2 y^3+2 y^2-3$ \\
 10 & $-y^7+y^5-y^4-y$ \\
 11 & $y^6-3 y^5+3 y^4-y^2+2 y-2$ \\
 12 & $y^6+y^5-y^4+y^2-y-1$ \\
 13 & $-y^7+2 y^6-3 y^5+y^3-2 y^2+2 y-4$ \\
 14 & $-2 y^7+y^6+y^5-2 y^4+y^3-y^2-2 y+1$ \\
\midrule $t \bmod 21$ & $Q_{S_t}^{\!\bmod{21}}(y) \bmod{\Phi_{21}(y)}$ \\ 
\midrule
 0 & $y^3-3 y^2+3 y-1$ \\
 1 & $y^7-y^5+y^2-1$ \\
 2 & $y^{11}-y^8+y^7-y^6+y^5-y^4+y^3-1$ \\
 3 & $-y^{11}+y^{10}-2 y^8+y^7-y^5+y^4-y-1$ \\
 4 & $-2 y^{10}+2 y^9-y^6+y^5-y^4+y^2-y$ \\
 5 & $y^{10}-y^8-y^7+2 y^6-y^4+2 y^3-y$ \\
 6 & $y^{11}-2 y^{10}+y^9+y^6+y^5-y^4-y^3+2 y^2-y-1$ \\
 7 & $y^{10}-1$ \\
 8 & $y^{11}-2 y^{10}+y^9-y^7-y^5+2 y^4-y^3-2$ \\
 9 & $-2 y^{11}+y^9-y^8+y^6-2 y^4+y^2-y-1$ \\
 10 & $-y^{11}+y^{10}+y^9-y^8+y^6-y^4+y^3-y+1$ \\
 11 & $y^{11}-y^9+y^8+y^7-y^6+y^5+y^4-2 y^3+y^2+y-1$ \\
 12 & $y^{11}+y^8-y^6+y^4-1$ \\
 13 & $y^{11}-2 y^{10}+y^9+2 y^8-4y^7+y^6+y^5-y^4-y^3+2 y^2-3$ \\
 14 & $-y^{10}-y^3-1$ \\
 15 & $-y^{11}+2 y^{10}-y^9-y^5+y^4+y^3-y^2$ \\
 16 & $y^{11}-y^9-y^8+2 y^7-y^5+3 y^4-y^3-y^2+y-1$ \\
 17 & $-y^{11}+y^{10}-y^9+3 y^8-y^7-y^6+y^5-y^3+2 y-3$ \\
 18 & $2 y^{11}-y^{10}-y^9+2 y^8-y^7-y^6+y^5-y^3+y^2-2$ \\
 19 & $-y^{11}+2 y^{10}-3 y^9+y^7-y^6+y^3-3 y^2+y-2$ \\
 20 & $-2 y^{11}+y^{10}+y^9-2 y^8+y^7+y^6-2y^5 -y^4+2 y^3-y^2-2 y+1$ \\
\midrule $t \bmod 30$ & $Q_{S_t}^{\!\bmod{30}}(y) \bmod{\Phi_{30}(y)}$ \\ 
\midrule
 0 & $y^3-3 y^2+3 y-1$ \\
 1 & $y^7-y^5+y^2-1$ \\
 2 & $2 y^7-2 y^4$ \\
 3 & $2 y^7-y^6-y^5-y^3+2 y-2$ \\
 4 & $-y^7-y^6+y^5+y^4-y-1$ \\
 5 & $-y^7+2 y^6-y^5-y^4-y^3+2 y^2-y$ \\
 6 & $-y^7-y^6+y^5+2 y^2-2$ \\
 7 & $-4 y^7-y^6+2 y^5+2 y^4+3 y^3+y^2-5$ \\
 8 & $y^7+2 y^6-y^5-2 y^2-y+2$ \\
 9 & $3 y^7+y^6+y^5-3 y^4-2 y^3-2 y^2+2 y+1$ \\
 10 & $-y^7+y^5+y^4-y-2$ \\
 11 & $4 y^7+y^6-3 y^5-y^4-2 y^3-3 y^2+2 y+2$ \\
 12 & $-y^6-y^5+y^4+y^2-y-1$ \\
 13 & $-y^7+y^5-y^3$ \\
 14 & $-4 y^7-3 y^6+y^5+4 y^4+3 y^3+3 y^2-7$ \\
 15 & $y^3+y^2-y-1$ \\
 16 & $y^7+y^5-2 y^4+2 y^3-y^2-1$ \\
 17 & $-2 y^7+2 y^5+2 y^4-2 y-2$ \\
 18 & $2 y^7+y^6-y^5-2 y^4-y^3$ \\
 19 & $3 y^7+3 y^6-3 y^5-3 y^4-2 y^3-2 y^2+y+3$ \\
 20 & $y^7-y^5-y^4-y^3+y$ \\
 21 & $-5 y^7-y^6+y^5+4 y^4+2 y^3+2 y^2-2 y-4$ \\
 22 & $4 y^7+y^6-2 y^5-2 y^4-3 y^3-y^2+4 y+1$ \\
 23 & $-y^7-2 y^6+3 y^5+2 y^3+y-4$ \\
 24 & $-y^7+y^6+y^5+y^4-2 y-1$ \\
 25 & $y^7-2 y^6+y^5+y^4-y-2$ \\
 26 & $y^6+y^5-y^4-2 y^3-y^2$ \\
 27 & $y^6-y^5-y^4-y^2-y+1$ \\
 28 & $-3 y^7-y^5+2 y^4+y^3+2 y^2-2 y-2$ \\
 29 & $-y^6-y^5+y^3+y^2-1$ \\
\midrule $t \bmod 42$ & $Q_{S_t}^{\!\bmod{42}}(y) \bmod{\Phi_{42}(y)}$ \\ 
\midrule
 0 & $y^3-3 y^2+3 y-1$ \\
 1 & $y^7-y^5+y^2-1$ \\
 2 & $y^{11}-y^8+y^7-y^6+y^5-y^4+y^3-1$ \\
 3 & $-y^{11}+y^{10}+y^7-y^5+y^4-y-1$ \\
 4 & $2 y^9-2 y^7-y^6+y^5+y^4-y^2-y$ \\
 5 & $2 y^{11}-y^{10}-y^8-y^7+2 y^6-y^4-2 y^2+y$ \\
 6 & $-y^{11}+y^9-y^6-y^5+y^4+y^3-y-1$ \\
 7 & $-y^{10}-2 y^9+2 y^8+2 y^2-2 y-1$ \\
 8 & $-y^{11}+y^9-y^7+y^5-y^3$ \\
 9 & $-4 y^{11}+y^9+3 y^8-y^6-2 y^5+2y^4+2 y^3+y^2-y-3$ \\
 10 & $3 y^{11}-y^{10}-y^9-y^8+y^6-y^4-y^3+3 y-1$ \\
 11 & $-3 y^{11}-2 y^{10}+y^9+3 y^8+3 y^7-3 y^6-y^5+y^4+4 y^3+y^2-y-5$ \\
 12 & $y^{11}+2 y^{10}-y^8-2 y^7+y^6+2y^5-y^4-2 y^3+1$ \\
 13 & $3 y^{11}+2 y^{10}-y^9-4 y^8+y^6+3 y^5-y^4-3 y^3-2 y^2+2 y+1$ \\
 14 & $y^{10}-y^3-1$ \\
 15 & $y^{11}-2 y^{10}+y^9-y^5+y^4+y^3-y^2-2$ \\
 16 & $y^{11}+2 y^{10}-y^9-3 y^8+2 y^6+y^5-y^4-3 y^3-y^2+y+1$ \\
 17 & $y^{11}+y^{10}-y^9-y^8-y^7+y^6+y^5-2 y^4-y^3+1$ \\
 18 & $y^{10}-y^9-y^7+y^6-y^5-y^3+y^2$ \\
 19 & $-y^{11}+y^9-y^7-y^6+2 y^4+y^3-y^2-y$ \\
 20 & $-4 y^{11}-3 y^{10}+y^9+4 y^8+3 y^7-y^6-4y^5+y^4+4 y^3+3 y^2-7$ \\
 21 & $y^3+y^2-y-1$ \\
 22 & $y^7+y^5-2 y^4+2 y^3-y^2-1$ \\
 23 & $y^{11}+y^8-y^7-y^6+y^5+y^4-y^3-1$ \\
 24 & $y^{11}-y^{10}+y^7+y^5-y^4-y-1$ \\
 25 & $-2 y^{11}-2 y^{10}+2 y^9+2 y^8+2 y^7-y^6-3y^5+y^4+2 y^3+y^2-y-4$ \\
 26 & $2 y^{11}+y^{10}-2 y^9-y^8+y^7-y^4-2 y^3+y$ \\
 27 & $3 y^{11}+2 y^{10}-y^9-2 y^8-4 y^7+y^6+3 y^5-y^4-3 y^3-2 y^2+y+3$ \\
 28 & $-y^{10}-1$ \\
 29 & $-y^{11}+2 y^{10}-y^9-y^7-y^5+2 y^4-y^3$ \\
 30 & $-2 y^{11}-2 y^{10}+y^9+y^8+2 y^7-y^6-2 y^5+2y^4+2 y^3+y^2-y-3$ \\
 31 & $-3 y^{11}+y^{10}+y^9+y^8-y^6+y^4+y^3+y-3$ \\
 32 & $3 y^{11}+2 y^{10}-y^9-3 y^8-3 y^7+3 y^6+3y^5-y^4-2 y^3-3 y^2+y+3$ \\
 33 & $-y^{11}-2 y^{10}+2 y^9+y^8+2y^7-y^6-y^4+2 y^3-3$ \\
 34 & $-y^{11}+y^9+2 y^8-y^6-y^5+y^4+y^3-2 y-1$ \\
 35 & $y^{10}+2 y^9-2 y^8-y^3-1$ \\
 36 & $y^{11}-y^9+y^5-y^4-y^3+y^2-2$ \\
 37 & $3 y^{11}-y^9-y^8-2 y^7+2 y^6+y^5-3 y^4-y^3-y^2+y+1$ \\
 38 & $-y^{11}-y^{10}+y^9+y^8-y^7-y^6-y^5+2 y^4+y^3-2 y-1$ \\
 39 & $2 y^{11}+y^{10}-3 y^9-2 y^8+y^7+y^6+y^5-2 y^4-3 y^3+y^2+2 y$ \\
 40 & $-3 y^{11}-y^9+2 y^8+y^7-y^6-2 y^5+2 y^4+y^3+3 y^2-y-4$ \\
 41 & $-y^{10}-y^9+y^7+y^6-y^4+y^2-1$ \\
\bottomrule
\end{longtable}
}

\end{document}